\numberwithin{equation}{section}
\numberwithin{figure}{section}
\newtheorem{claim}[theorem]{Claim}
\newtheorem{conjecture}[theorem]{Conjecture}
\newcommand{\ES}{Erd\H{o}s-S\'{o}s }
\newcommand{\LKS}{Loebl-Koml\'{o}s-S\'{o}s }
\newcommand{\fD}{\mathcal{D}}
\newcommand{\dom}{\mathrm{dom}}
\newcommand{\reg}{{T\ref{thm:regurality_lemma}}}
\newcommand{\adeg}{\mathrm{d\overline{eg}}}
\newcommand{\dens}{d}
\newcommand{\eps}{\varepsilon}
\newcommand{\dist}{\mathrm{dist}}
\renewcommand{\phi}{\varphi}
\renewcommand{\epsilon}{\varepsilon}
\renewcommand{\bf}{\mathbf}
\newcommand{\JUSTIFY}[1]{\mbox{\fbox{\tiny#1}}\quad}
\newcommand\abs[1]{\left|#1\right|}
\title{A local approach to the Erd\H{o}s-S\'{o}s conjecture\thanks{The author was supported by the Czech Science Foundation, grant number GJ16-07822Y.}}
\author{VÁCLAV ROZHOŇ\thanks Faculty of Mathematics and Physics, Charles University \& The Czech Academy of Sciences, Institute of Computer Science, Pod Vod\'{a}renskou v\v{e}\v{z}\'{\i} 2, 182 07 Prague, Czech Republic. With institutional support RVO:67985807. 
  (\email{vaclavrozhon@gmail.com}).}
\begin{document}
\maketitle
\begin{abstract}
A famous conjecture of Erd{\H o}s and Sós states that every graph with average degree more than $k - 1$ contains all trees with $k$ edges as subgraphs. 
We prove that the Erd{\H o}s-Sós conjecture holds approximately, if the size of the embedded tree is linear in the size of the graph, and the maximum degree of the tree is sublinear.

\end{abstract}

\begin{keywords}
Erd\H{o}s-S\'{o}s conjecture, embedding of trees, extremal combinatorics
\end{keywords}

\section{Introduction}
\label{sec:intro}
Typical problems in extremal graph theory ask, how many edges in a graph force it to contain a given subgraph. A classical example of a result in this area is Turán’s Theorem, which determines the average degree that guarantees the containment of the complete graph $K_r$. 
A more complex example is the Erd{\H o}s-Stone Theorem~\cite{Erdos1946}, which essentially determines the average degree condition guaranteeing that the host graph contains a fixed non-bipartite graph. 
On the other hand, for a general bipartite graph the problem is wide open. 
If the embedded graph is a tree, the celebrated conjecture of Erd{\H o}s and Sós asserts that an average degree greater than $k - 1$ forces a copy of any tree of order $k + 1$. 

\begin{conjecture}[The Erd\H{o}s-Sós conjecture]
\label{conj:es}
Every graph $G$ with $\adeg(G) > k-1$ contains any tree on $k+1$ vertices. 
\end{conjecture}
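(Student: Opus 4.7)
The conjecture as stated is a longstanding open problem, so the plan below is best viewed as aspirational; it sketches lines of attack I would pursue. My starting point would be a minimum counterexample argument. Let $(G,T)$ be a pair where $T$ is a tree on $k+1$ vertices, $\adeg(G) > k-1$, $T \not\subseteq G$, and $|V(G)|$ is minimum subject to this. Standard minimality arguments force $\delta(G) \geq k/2$ (otherwise one could delete a low-degree vertex while keeping $\adeg > k-1$), and every edge of $G$ must be ``critical'' in the sense that its removal destroys the average degree condition. Likewise, choosing $T$ minimally among trees on $k+1$ vertices not embeddable into $G$ constrains the structure of $T$: contracting any leaf should yield an embeddable tree, so leaves must attach to high-degree internal vertices.

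Having restricted to such structured $(G,T)$, I would split by host density. In the dense regime $|V(G)| \geq C k$ I would apply Szemer\'edi's regularity lemma to obtain a cluster graph $R$ of high edge density, locate a suitable bipartite or path-like substructure in $R$, and embed $T$ by rooting at its centroid and alternating between two clusters via the embedding lemma for regular pairs. In the sparse regime $|V(G)| \leq C k$ I would pursue direct BFS embedding, using $\delta(G) \geq k/2$ at each step to find unused neighbors of the parent, and falling back on Pósa-type rotations when a greedy step fails: swap the role of an already embedded vertex with an unused one to free capacity in the problematic neighborhood. This would be complemented by a token/absorbing argument to reserve vertices for the leaves of $T$.

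The main obstacle, and the reason this conjecture has resisted proof for over fifty years, is the tightness of the extremal examples. Disjoint copies of $K_k$ show that $k-1$ is sharp for path-like trees, whereas complete bipartite-like constructions are tight for trees with many leaves; any proof must leave no slack and must accommodate both extremes simultaneously. Rotation arguments that work well for paths do not transfer to stars, and dense-regime embedding lemmas designed for bounded-degree parts do not cope with the long bare paths that also occur in $T$. The hardest step is therefore a unified embedding scheme, most likely one built on a structural dichotomy of $T$ into path-like and branching portions, with a different rerouting or matching strategy applied to each and a careful interface argument gluing the two. Without such a dichotomy, one inevitably loses the tight constant $k-1$, which is exactly why partial results in the literature assume extra hypotheses such as large host, bounded maximum degree of $T$, or restricted tree structure.
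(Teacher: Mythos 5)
The statement you are addressing is Conjecture~\ref{conj:es} itself, which the paper does not prove (and does not claim to prove): it is stated as an open conjecture, and the paper only establishes approximate versions, Theorems~\ref{thm:ES_dense_bounded} and~\ref{thm:localES_dense_skew}, under the extra hypotheses of an additive $+\eta n$ slack in the degree condition and sublinear maximum degree of the tree. Your proposal, by your own admission, is a research programme rather than a proof, and as such it has genuine gaps at every step beyond the standard reductions. The minimum-counterexample reduction giving $\delta(G) \ge k/2$ (delete a vertex of degree at most $(k-1)/2$) is fine and classical, but nothing after it is closed. In the dense regime, applying Szemer\'edi's regularity lemma inevitably discards edges inside clusters, in irregular pairs, in low-density pairs and in the garbage set, which costs an additive error that is linear in $n$; this is precisely why the paper (and the related literature it cites) only obtains the conjecture with $\adeg(G) \ge k + \eta n$, and why the announced Ajtai--Koml\'os--Simonovits--Szemer\'edi solution requires an extension of the regularity method rather than the standard lemma. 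Your sketch gives no mechanism for recovering the exact threshold $k-1$ from a cluster-graph argument. In the sparse regime, P\'osa rotations are genuinely path-specific, and the ``token/absorbing argument'' you invoke for the leaves is exactly the unconstructed core of the argument; moreover, the paper's Figure~\ref{fig:broom} example (a vertex complete to two cliques of order $k/2$, which has $\Delta \ge k$ and $\delta \ge k/2$ yet misses a diameter-four tree, and by Proposition~\ref{prop:random_trees} misses almost all trees) shows that any greedy/BFS scheme driven only by the local degree information surviving the minimality reduction must fail, so the global average-degree hypothesis has to be used in a way your outline does not specify.

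For comparison, what the paper actually does is deliberately more modest and therefore completable: it accepts the $+\eta n$ loss and the $\Delta(T) \le \gamma k$ restriction, partitions the tree via an $\ell$-fine (and one-sided) partition into seeds and microtrees, embeds the seeds into a single high-degree cluster by Proposition~\ref{prop:embed_seeds}, and fills a maximal matching in the neighbourhood of that cluster with microtrees via Proposition~\ref{prop:embed_regular_pair}, with a reserved set $F$ to finish. If your goal is the full conjecture, you would need to identify what replaces the regularity lemma at the exact threshold and what replaces rotation for branching trees; as written, the proposal does not contain a proof of the statement.
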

Here $\adeg(G)$ means the average degree of $G$; similarly, we denote the minimum and the maximum degree of $G$ by $\delta(G)$ and $\Delta(G)$, respectively.

Observe that the conjecture is optimal, since a graph with average degree at most $k-1$ may have only $k$ vertices. Also observe that if we replace the condition on the average degree by a stronger condition $\delta(G) > k-1$, the conjecture becomes trivial, since we can embed any tree on $k+1$ vertices in $G$ in a greedy manner -- every time we embed a vertex of the tree such that its neighbour is already embeded; since the neighbourhood of the already embedded vertex is sufficiently large, we may always do that. Note that each graph with average degree $\adeg(G) \ge 2k$ contains a subgraph with $\delta(G) \ge k$. Such a subgraph can be found by repeatedly deleting vertices of degree smaller than $k$ from $G$. Hence, the conjecture also holds trivially if we allow ourselves to lose a factor of $2$. 

After one verifies that the \ES conjecture is true for both trees of diameter at most three, and for paths (this was done already by Erd{\H o}s and Gallai in 1959 \cite{Erdos1959}) one can observe that such trees can be embedded even in the case when the host graph contains a vertex of degree at least $k$ and its minimum degree is at least $k/2$. This is trivial for trees of diameter at most three, while for the case of paths this follows from the mentioned proof of Erd{\H o}s and Gallai. 

While this local condition on the minimum and maximum degree of $G$ suffices for both of these special cases, it already fails for trees of diameter four, as is demonstrated by the following example from \cite{Havet2018+}. Let $T$ be a tree consisting of a vertex connected to centres of three stars on $k/3$ vertices and let $G$ be a graph consisting of a vertex complete to either two cliques of size $k/2$, or $K_{k/2, k/2}$. 
Then $\Delta(G) \ge k$ and $\delta(G) \ge k/2$, but $T$ is not contained in $G$ (see Figure \ref{fig:broom}). 
This example shows that it would be na\"ive to try to prove the \ES conjecture in the most general setting using only the local consequence of the bound on the average degree on the maximum and minimum degree of $G$. 
We will actually show in Section \ref{subsec:restrict} that trees of diameter at most three and paths are special cases. Proposition~\ref{prop:random_trees} states that with high probability, a random tree on $k+1$ vertices cannot be embedded in the host graph with two cliques from Figure \ref{fig:broom}. 

\begin{figure}
    \centering
    \includegraphics[width=\textwidth]{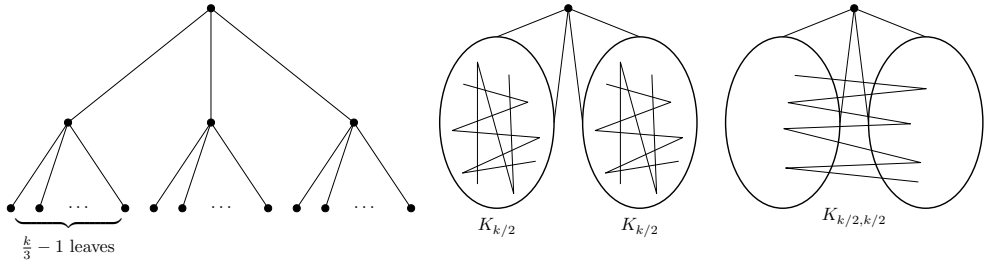}
    \caption{A tree on $k+1$ vertices and two host graphs of the same size showing that there are graphs with $\Delta(G)=k$ and $\delta(G)\ge k/2$ that do not contain a tree on $k+1$ edges. 
    The tree consists of a vertex connected to centres of three stars with $\frac{k}{3}-1$ leaves. The two graphs consist of a vertex complete to either to complete graphs on $\frac{k}{2}$ vertices or a complete bipartite graph with two colour classes with $\frac{k}{2}$ vertices. 
    The example is taken from \cite{Havet2018+}. }
    \label{fig:broom}
\end{figure}

Despite this fact, we devote this paper to this \emph{local} approach to the \ES conjecture. We are interested in the following question: if the host graph contains a certain amount of vertices of degree at least roughly $k$ and its minimum degree is at least roughly $k/2$, can we embed all trees of order $k$ in the graph?
We partially answer this question for dense graphs, which in turn implies an approximate version of the \ES conjecture. 

\subsection{Main result of this paper}

The main positive result of this paper is the following theorem. 

\begin{theorem}
\label{thm:ES_dense_bounded}
For any $\eta > 0$ there exists $n_0$ and $\gamma > 0$ such that for every $n > n_0$ and $k > 0$, any graph of order $n$ with average degree $\adeg(G)\ge k + \eta n$ contains every tree on $k$ vertices with maximum degree $\Delta(T) \le \gamma k$. 
\end{theorem}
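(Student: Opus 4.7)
The plan is to prove Theorem~\ref{thm:ES_dense_bounded} by the regularity method, extracting usable local degree conditions from the average-degree hypothesis and then embedding $T$ via a blow-up style greedy procedure that exploits the sublinear bound on $\Delta(T)$.

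First, I would reduce to a subgraph with strong local degree conditions. By iteratively deleting vertices of degree less than $k/2 + \eta n/4$, one obtains a subgraph $G' \subseteq G$ with $\delta(G') \ge k/2 + \eta n/4$ and $\adeg(G') \ge k + \eta n / 2$, since each deletion removes fewer edges than the current half-average. In particular a positive fraction of vertices of $G'$ have degree at least $k + \Omega(\eta n)$, for otherwise the average would drop below $k+\eta n/2$. I would then apply Szemer\'{e}di's regularity lemma with $\eps \ll \eta, \gamma$, clean the partition by discarding vertices in irregular pairs, edges in pairs of density below some $d \ll \eta$, and vertices of low degree within each cluster, and finally lift these degree conditions to the weighted cluster graph $R$.

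Second, in $R$ I would locate a structure that can support the embedding of any tree on $k$ vertices with sublinear maximum degree. Concretely I would try to extract a matching $M$ in $R$ of total weight (i.e.\ total size of matched clusters) at least $k + \Omega(\eta n)$, together with a "hub" cluster $C^{\star}$ of large degree in $R$ that is adjacent to a constant fraction of the edges of $M$. Such objects should be forced by the minimum-degree condition on $R$ together with the abundance of heavy clusters. The matching $M$ supplies the bulk of the space for embedding $T$ edge-by-edge across regular pairs, while the hub $C^{\star}$ serves to reroute the embedding whenever the tree branches. Having chosen this scaffold, I would walk through $T$ in DFS order and embed vertex by vertex; since $\Delta(T) \le \gamma k$, at every step only a $\bigO(\gamma)$ fraction of any cluster is "used up", so the standard regularity inheritance lemma guarantees that the next neighbour can always be found.

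The main obstacle I foresee is the second step: identifying the right structure in $R$ that is actually forced by average degree $k + \eta n$. The extremal example of Figure~\ref{fig:broom} shows that purely local degree information is insufficient when the tree fills the host graph tightly, so the argument must exploit the $\eta n$ slack crucially, most likely via a case split on whether $R$ looks "cliquey" or "expander-like" near the heavy clusters. Fitting together the extracted matching-plus-hub structure with the branching pattern of an arbitrary tree -- rather than, say, a path or caterpillar -- is where I expect the technical core of the proof to lie.
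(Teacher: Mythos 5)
Your first step matches the paper's reduction to Theorem~\ref{thm:localES_dense_skew} with $r=\frac12$: you pass to a subgraph $G'$ with $\delta(G')\ge k/2+\Omega(\eta n)$ and, by an averaging argument, observe that a positive fraction of vertices of $G'$ have degree $\ge k+\Omega(\eta n)$. That is exactly what the paper does, and after this point the paper simply invokes Theorem~\ref{thm:localES_dense_skew}. Since you cannot cite that theorem blind, you then sketch a direct regularity argument, and this is where the gap lies.

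Your ``hub cluster $C^{\star}$ plus matching $M$'' picture does roughly match the scaffold the paper builds in the proof of Theorem~\ref{thm:localES_dense_skew}: a cluster $\bf{v}_1$ rich in high-degree vertices, a maximal matching $\bf{M}$ in $N_{\bf{G}}(\bf{v}_1)$, and the sets $\bf{O}_1,\bf{O}_2$ (your sketch omits the latter, but that is repairable). However, the step ``walk through $T$ in DFS order, embed vertex by vertex, and since $\Delta(T)\le\gamma k$ only an $\bigO(\gamma)$ fraction of any cluster is used up at each step'' is where the argument breaks. The bound $\Delta(T)\le\gamma k$ controls branching at a single vertex, not the cumulative occupancy of a cluster: the whole tree has $k$ vertices and clusters \emph{will} fill up, so per-step usage is not the relevant quantity, and a naive DFS gives you no mechanism to decide which regular pair to embed into or to ensure that the target cluster still has free typical vertices when you return to it. What actually makes the paper's proof go through is a prior \emph{tree decomposition}: the one-sided $\ell$-fine partition (Definition~\ref{def:partition_bounded}, Lemma~\ref{lem:partition_bounded}) breaks $T$ into $\bigO(1/\beta)$ seeds $W$ plus microtrees of size $\le\beta k$, and the sublinear-degree hypothesis is used precisely to keep $|W|\le 336k(1+\gamma k)/(\beta k)$ small enough that all seeds fit into a single cluster $\bf{v}_1$ in vertices typical to almost every neighbouring cluster. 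The microtrees are then embedded one by one into regular pairs via Proposition~\ref{prop:embed_regular_pair}, with a reserved set $F$ and a three-phase balancing argument to finish the last $\sqrt[4]{\eps}k$ vertices. Without this decomposition step and the associated bookkeeping (saturation/fullness of clusters, the balancing condition, the reserved set $F$), your plan cannot be completed as stated.
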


Another way to state this result is the following. 
Suppose that we have a class of trees $\mathcal T$ such that there exits a function $f, f(n) \in o(n)$ with $\forall T \in \mathcal T: \Delta(T) \le f(|T|)$. Then there is another function $g, g(n) \in o(n)$ such that every graph with average degree $k + g(n)$ contains any tree from $\mathcal T$ on at most $k$ vertices. 

Theorem~\ref{thm:ES_dense_bounded} is trivial if $k \le \eta n/2$, since, as we already mentioned, a graph with average degree at least $\eta n$ contains a subgraph with minimum degree at least $\eta n/2$ and we can then embed $T$ greedily in the subgraph. 
Hence, we interpret this result as one for trees of size linear in the size of the host graph: only for trees of size linear in the size of the host graph this result is nontrivial. 
Another viewpoint is to consider this as a result for dense graphs: the additive error term $+\eta n$ ensures that $G$ contains at least $\eta n^2/2$ edges, hence it is dense.


We in fact prove a stronger result -- Theorem~\ref{thm:localES_dense_skew} -- that we first describe less formally. 
If we fix $r=1/2$ in the statement of Theorem~\ref{thm:localES_dense_skew}, then it states that one can embed any tree with $k$ vertices from a class of trees with sublinear maximum degree in every large enough host graph that fulfils two conditions:
\begin{enumerate}
    \item there is positive proportion of vertices of degree at least roughly $k$,
    \item the minimum degree is at least roughly $k/2$. 
\end{enumerate}
Additionally we get the following trade-off. If our class of trees satisfies that the size of the smaller colour class of any tree $T$ from the class is bounded by $r|T|$, it suffices to assume that the minimum degree of the host graph is at least roughly $rk$. Also, only the vertices of the bigger colour class are required to have bounded maximum degree. 


\begin{theorem}
\label{thm:localES_dense_skew}
For any $r, \eta > 0$ there exist $n_0$ and $\gamma > 0$ such that the following holds. 
Let $G$ be a graph of order $n > n_0$ and $T$ a tree of order $k$ with two colour classes $V_1, V_2$ such that $|V_1| \le rk$ and $\forall v \in V_2 : \deg(v) \le \gamma k$. 
If $\delta(G) \ge rk + \eta n$, and at least $\eta n$ vertices of $G$ have degree at least $k + \eta n$, then $G$ contains $T$. 
\end{theorem}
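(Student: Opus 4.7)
The strategy is to apply Szemer\'edi's regularity lemma to $G$, identify a well-chosen skeleton in the reduced graph on which to route the bipartition of $T$, and then embed $T$ greedily in BFS order, exploiting that the degree bound $\deg_T(v) \le \gamma k$ for $v \in V_2$ is sublinear while the bound $|V_1| \le rk$ matches the minimum-degree hypothesis.

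Concretely, choose parameters $\eps \ll d \ll \gamma \ll \eta$ and apply the regularity lemma to obtain a partition of $V(G)$ into clusters of a common size $L$ together with a reduced graph $R$ whose edges are $\eps$-regular pairs of density at least $d$. Standard clean-up transfers the hypotheses to $R$: after discarding a small fraction of atypical clusters, every cluster has weighted degree in $R$ at least $(r+\eta/2)k/L$, and a linear fraction of clusters---call them \emph{heavy}---contain many vertices of $G$-degree at least $k+\eta n$ and hence have weighted degree at least $(1+\eta/2)k/L$ in $R$. Pick a heavy cluster $B$; using a K\H{o}nig-type bipartite argument in the neighborhood of $B$, produce a matching-like skeleton $\mathcal{M}$ in $R$ whose clusters split into a \emph{$V_1$-side} of total capacity at least $rk+\eta n/4$ and a \emph{$V_2$-side} of total capacity at least $(1-r)k+\eta n/4$, with each matched pair of clusters $\eps$-regular of density at least $d$.

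The embedding then proceeds by rooting $T$ at a vertex $r_0 \in V_2$, mapping $r_0$ to a vertex of $B$ whose $G$-degree exceeds $k+\eta n$, and processing $T$ in BFS order so that each next vertex $v$ is placed in the cluster prescribed by the skeleton as a free common neighbor of the already-embedded parent. The standard regularity fact---that in an $\eps$-regular pair $(X,Y)$ of density at least $d$ all but $\eps|X|$ vertices of $X$ have at least $(d-\eps)|Y|$ neighbors in $Y$---supplies the room to continue: when embedding $v \in V_2$, only $\deg_T(v) \le \gamma k \ll L$ slots need be reserved in the next cluster, which is automatic; when embedding $v \in V_1$, the total number of $V_1$-vertices placed in any single cluster is bounded by that cluster's capacity on the $V_1$-side of $\mathcal{M}$, and $|V_1| \le rk$ keeps this within budget.

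The main obstacle is that a vertex $u \in V_1$ may have tree-degree close to $k$, so that all of $u$'s $V_2$-children must be placed in clusters reachable from the image of $u$. A single matching edge of $\mathcal{M}$ cannot absorb them, since each cluster has size only $L \ll k$. The fix is to replace $\mathcal{M}$ with a \emph{fractional} matching in $R$ and to route the children of such a heavy $V_1$-vertex through a fan of clusters so that no single cluster receives more than $O(L)$ subtrees. Feasibility of the routing reduces to a flow-type problem whose solvability follows from the transferred min-degree condition on $R$ together with the existence of the heavy cluster $B$; carrying out this routing and accounting for how many vertices each cluster has absorbed so far is the bulk of the technical work.
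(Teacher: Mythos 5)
You correctly identify the top-level ingredients the paper uses --- Szemer\'edi's regularity lemma, a heavy cluster, and a matching-like skeleton in its neighbourhood --- but there is a genuine gap: you never partition $T$ itself, and without that a vertex-by-vertex greedy BFS embedding cannot work. Each cluster has size $L = n/m$ with $m$ bounded while $T$ has $k = \Theta(n)$ vertices, so the image of $T$ must change regular pairs many times. In a BFS traversal, once the image leaves the hub cluster along a long path of $T$ you have no controlled way to re-enter the hub or jump to a fresh regular pair: the image drifts along a walk in the reduced graph, and the degree hypotheses on $G$ alone do not guarantee that this walk avoids full or atypical clusters for $k$ steps. The paper instead first partitions $T$ via a one-sided $\ell$-fine partition (Lemma~\ref{lem:partition_bounded}) into a set of \emph{seeds} $W \subseteq V_1$ of size $O\bigl((1+\Delta)k/\ell\bigr)$ and a family of \emph{microtrees} of size $\le \beta k$, each attached to the rest of $T$ only through at most two seeds. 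All the seeds are placed inside the one heavy cluster $\mathbf{v}_1$ (Proposition~\ref{prop:embed_seeds}), and this is precisely where the bound $\deg_T(v) \le \gamma k$ on $V_2$ is used: it keeps $|W|$ small enough to fit into a $d/4$-fraction of $\mathbf{v}_1$. Because all seeds live in the hub, each microtree can then be routed to a freshly chosen regular pair in $N_{\mathbf{G}}(\mathbf{v}_1)$ via Proposition~\ref{prop:embed_regular_pair}, and the microtrees are processed independently --- exactly the ``teleportation'' between pairs that your scheme lacks.

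Two further points. The ``fractional matching / fan of clusters'' fix you propose for a $V_1$-vertex of linear tree-degree attacks a symptom of the same underlying gap: in the paper such a vertex automatically becomes a seed and is embedded in $\mathbf{v}_1$, whose degree exceeds $k+\eta n$, so its many children spread over the entire matching with no flow argument needed. Note also the asymmetry: the paper places the $V_1$-side (the seeds) in the heavy cluster, whereas you root $T$ at a $V_2$-vertex; the hypotheses $|V_1| \le rk$ and $\delta(G) \ge rk + \eta n$ only match up when the colour class that is allowed linear degree, namely $V_1$, lives in the high-degree hub. Finally, even granting a routing, your sketch does not say how to finish when only $o(k)$ vertices of $T$ remain but the reachable clusters are nearly full; the paper reserves a set $F$ in advance (Claim~\ref{claim:localES_finishing}) that is kept out of the main embedding and absorbs the last $\sqrt[4]{\eps}k$ vertices, and some such reservation is unavoidable and missing from your account.
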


As in the case of Theorem~\ref{thm:ES_dense_bounded}, this theorem is nontrivial only for trees of size linear in the size of the host graph. 

We  postpone a simple reduction of Theorem~\ref{thm:ES_dense_bounded} to Theorem~\ref{thm:localES_dense_skew}, as well as some further remarks regarding the theorem, to Section \ref{sec:discuss}.   

We believe that Theorem \ref{thm:ES_dense_bounded} for $r=1/2$ can be substantially generalised and put this generalisation as a conjecture. 
Motivation of this conjecture is the question of how many vertices of degree at least $k$ are needed to embed any tree on $k+1$ vertices, if we moreover assume that the minimum degree is at least $k/2$. 
\begin{conjecture} (Klimošová, Piguet, Rozhoň)
    \label{conj:dense_big_degree}
    Every graph $G$ on $n$ vertices with $\delta(G) \ge k/2$ and at least $\frac12 \cdot \frac{n}{\sqrt{k}}$ vertices of degree at least $k$ contains every tree of order $k+1$. 
\end{conjecture}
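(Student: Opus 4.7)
The plan is to extend the regularity-based machinery behind Theorem~\ref{thm:localES_dense_skew} to this considerably sparser high-degree regime. A first reduction handles the range $k \le \eta n$ for any fixed $\eta > 0$: the standard peeling argument sketched in the introduction yields a subgraph of minimum degree at least $k$ in which $T$ embeds greedily. So the interesting regime is $k = \Theta(n)$, where $\frac{n}{2\sqrt{k}}$ is only of order $\sqrt{n}$, hence sublinear in $n$ and outside the reach of Theorem~\ref{thm:localES_dense_skew}.

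I would then apply Szemer\'{e}di's regularity lemma to $G$, with one technical twist: because the set $H$ of high-degree vertices can be much smaller than a typical cluster, a naive partition would erase $H$ from the cluster graph entirely. I would therefore use a two-phase partition that first isolates $H$ into singleton (or very fine) clusters and then regularizes the rest, so that the resulting cluster graph retains both the $k/2$-minimum-degree backbone and the high-degree anchors. The tree $T$ would then be attacked with the standard dichotomy familiar from the \LKS literature: either $T$ admits a large induced matching among its internal vertices, in which case it is embedded caterpillar-fashion along a regular walk; or $T$ decomposes into many small subtrees hanging off few high-degree centers, in which case the centers are mapped to ``hub'' clusters and the subtrees are distributed by a fractional matching argument. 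The role of the $\frac{n}{2\sqrt{k}}$ anchors is to provide the $\sqrt{k}$ parallel bridges that an unbalanced tree such as the broom of Figure~\ref{fig:broom} needs to cross between dense bulks of the host.

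The principal obstacle is the tightness of the threshold: the $\sqrt{k}$ scale indicates that the proof must be accurate up to the leading constant, which rules out any blanket application of the regularity lemma that loses polynomial factors. This strongly suggests a stability dichotomy. Far from the extremal configurations (blow-ups of the \HRSW bowtie from Figure~\ref{fig:broom} and related glued-clique graphs), the host should already have enough local density for a direct Theorem~\ref{thm:localES_dense_skew}-style embedding, possibly with a weakened high-degree hypothesis. Close to an extremal configuration, the rigidity of the structure should allow an ad-hoc embedding by hand that exploits the forced placement of the high-degree anchors. Proving the required stability statement, and in particular identifying the correct class of near-extremal graphs and showing that the constant $\frac{1}{2}$ is sharp there, seems to be the crux, and is, I expect, the reason the conjecture is still open.
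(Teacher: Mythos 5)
The statement you were asked to prove is stated in the paper as a \emph{conjecture} (Conjecture~\ref{conj:dense_big_degree}, attributed to Klimošová, Piguet, and Rozhoň); the paper does not prove it, and to my knowledge it remains open. The only thing the paper offers in its direction is the construction in Section~\ref{sec:discuss} (two cliques of sizes $\frac{k-1}{2}$ and $\frac{k+1}{2}$ joined by an independent set of $\frac{\sqrt{k}-1}{2}$ vertices complete to both), which shows that the constant $\frac12$ in $\frac12\cdot\frac{n}{\sqrt{k}}$ cannot be lowered. So there is no ``paper's own proof'' to compare against, and your proposal --- which you yourself close by saying the conjecture ``is still open'' --- is a research programme, not a proof attempt. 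Correctly diagnosing that the $\sqrt{k}$-scale threshold forces a stability analysis near explicit extremal configurations is a sensible reading of the situation, and it is consistent with why the authors left this as a conjecture rather than a theorem.

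One concrete flaw in the programme as written: the first reduction does not work. You claim that for $k\le\eta n$ the peeling argument from the introduction produces a subgraph with $\delta\ge k$. But peeling requires $\adeg(G)\ge 2k$, whereas the hypotheses here give only $\delta(G)\ge k/2$ and $\Theta(n/\sqrt{k})$ vertices of degree $\ge k$, which together guarantee $\adeg(G)\ge k/2$ (and, from the high-degree vertices, an additive $\Omega(\sqrt{k})$), far short of $2k$. Indeed, a disjoint union of cliques of order roughly $k/2+1$ augmented with a small number of high-degree vertices satisfies the hypotheses for small $k/n$ yet has no subgraph of minimum degree $k$. So the regime $k = o(n)$ is not trivial and cannot be dismissed; the whole range of $k$ must be handled. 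Beyond this, the proposal is a plausible but unexecuted outline --- the two-phase partition isolating the sparse set of anchors, the caterpillar-vs.-hubs dichotomy, and the stability step are all left as stated intentions rather than carried out, and each of them involves genuine technical work whose feasibility is not established here.
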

We postpone to Section \ref{sec:discuss} the construction that shows that the constant $\frac12$ in Conjecture \ref{conj:dense_big_degree} cannot be improved. 


\subsection{Relation of this paper to other work}

There are many partial results concerning the \ES conjecture. It has been verified for some special families of host graphs \cite{ Balasub2007, Brandt1996, Dobson2002, Sacle1997, Wang2000}, special families of trees embedded \cite{Fan2013, Fan2007, McLennan2005}, or when the size of the host graph is only slightly larger than the size of the tree \cite{Gorlich2016, Tiner2010, Wozniak1996}. 

A solution of this conjecture for large $k$, based on an extension of the regularity lemma, has been announced in the
early 1990’s by Ajtai, Komlós, Simonovits, and Szemerédi. This result will be published as a sequence of three papers \cite{Ajtai1,Ajtai3,Ajtai2}. Although Theorem~\ref{thm:ES_dense_bounded} is only a special case of this announced result, we still believe that it is of interest, since its proof is relatively straightforward. 

A similar approach to ours was recently independently used in \cite{Besomi18a} to obtain a result very similar to Theorem~\ref{thm:ES_dense_bounded}. The only difference is that the authors of \cite{Besomi18a} require the maximum degree of the respective tree to be less than $k^{\frac{1}{67}}$ instead of $o(k)$ as in Theorem~\ref{thm:ES_dense_bounded}. The follow-up work \cite{Besomi18+a, Besomi18+b} then independently proves Theorem~\ref{thm:ES_dense_bounded}.

The idea to study embedding of trees under conditions on the minimum and maximum degree comes from the paper \cite{Havet2018+} and was later developed in \cite{Besomi18+a, Besomi18+b, Besomi18a, Besomi18+c}. 
In a preliminary version of this paper we conjectured in this direction, together with Klimošová and Piguet, that any graph $G$ that satisfies $\Delta(G) \ge 4k/3$ and $\delta(G) \ge k/2$ embeds any tree on $k+1$ vertices, but this was shown to be false in \cite{Besomi18+c}.

To prove Theorem \ref{thm:localES_dense_skew} we employ the so-called regularity method, which is a relatively standard method used for embedding trees. 
This method was successfully used for proofs of similar results, notably for the sequence of results \cite{Cooley2009, HladkyLKS1, HladkyLKS2, HladkyLKS3, HladkyLKS4, Hladkyn, Piguet2012, Zhao2011} on the so-called \LKS conjecture that asserts that each graph containing at least half of vertices of degree at least $k$ contains any tree on $k+1$ vertices. 

Simonovits conjectured that one can generalise the statement of the \LKS conjecture. The Simonovits' conjecture 
states that if one assumes that only $r|G|$ vertices ($0 \le r \le\frac12$) of the host graph $G$ have degree at least $k$, one can still embed in $G$ all trees of order $k+1$ such that their smaller colour class has size at most $r(k+1)$. The dense approximate version (i.e., the necessary degree is replaced by $k + \eta n$ instead of just $k$) of this conjecture was proven in \cite{Klimosova2018+}. 
The authors used the word \textit{skew} to denote the ratio $r$ of the size of the smaller colour class of the tree and the size of the tree itself. We adopt this notation. 

Note that the spirit of the Simonovits' conjecture is the same as the spirit of Theorem~\ref{thm:localES_dense_skew}, i.e., one can generalise an embedding theorem~by considering the skew of the embedded tree as an additional parameter. Highly skewed trees (trees with very small $r$) can then (at least in the dense approximate setting) be embedded under much milder conditions than general trees. 
This suggests that the skew of the embedded tree could be considered as a natural parameter showing how hard it is to embed the given tree. 
To give an example: the star is an extremely skewed tree that is very easy to embed. On the other hand, the example tree from Figure \ref{fig:broom} is also highly skewed, albeit it is a hard example for certain embedding setting. 
The usefulness of this parameter is thus yet to be determined. 

\subsection{Organisation of the paper}

The paper is organised as follows. In the next section we prove Theorem \ref{thm:ES_dense_bounded} and provide several remarks and constructions relevant to the results mentioned in the introduction.  In Section \ref{sec:techniques} we explain standard tools that we later use for the proof of Theorem~\ref{thm:localES_dense_skew}. Finally, in Section \ref{sec:proof} we prove Theorem~\ref{thm:localES_dense_skew}.

\section{Proof of Theorem \ref{thm:ES_dense_bounded} and further remarks}
\label{sec:discuss}

In this section we prove Theorem \ref{thm:ES_dense_bounded} and then further elaborate on several topics already mentioned in the introduction. 

\label{subsec:redukce}


\begin{proof}[Proof of Theorem \ref{thm:ES_dense_bounded}]
Let $\eta' = \eta/2$ and let $G$ be a graph on $n \ge n_0=\frac{n_{0, T\ref{thm:localES_dense_skew}}(\eta')}{\eta}$ vertices. Here $n_{0, T\ref{thm:localES_dense_skew}}(\eta')$ means the output of Theorem \ref{thm:localES_dense_skew} with input $\eta'$ and $r=1/2$. Suppose that $k \ge \eta n /2$. 

We choose a subgraph $G' \subseteq G$ such that $\adeg(G') \ge k + \eta n$ and $\delta(G') \ge k/2 + \eta n/2$. Hence, we know that the size of $G'$ is at least $k + \eta n \ge \eta n \ge n_{0, T\ref{thm:localES_dense_skew}}$. 

We claim that at least $\eta' |G'|$ vertices of $G'$ have degree at least $k + \eta' n$ and hence we may apply Theorem \ref{thm:localES_dense_skew}. If this was not true, most of the vertices of $G'$ would have degree less than $k + \eta' n$ and hence
    \[
        \adeg(G')
        \leq \eta' \cdot n + (1-\eta') \cdot (k+\eta'n)
        < \eta' n + (k+\eta' n)
        = k + 2\eta' n
        = k + \eta n,
    \]
a contradiction.
\end{proof}

\subsection{A graph from Figure \ref{fig:broom} fails to embed a random tree}
\label{subsec:restrict}
We observe that the example graph with two cliques from Figure \ref{fig:broom} fails to embed not only the tree from the same figure, but it actually fails to embed most trees.  
\begin{proposition} [Stephan Wagner, personal communication]
\label{prop:random_trees}
For even $k$ the probability that a random unlabelled tree of size $k+1$ can be embedded in the graph $G$ consisting of a vertex complete to two cliques of size $k/2$ is in $ O(k^{-1/2})$. 
\end{proposition}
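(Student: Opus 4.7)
\textbf{The plan} is a deterministic combinatorial reduction, followed by an enumerative/analytic estimate on random Pólya trees.

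\textbf{Step 1 (reduction to the centroid).} Since $|T|=|G|=k+1$, any embedding $\phi\colon T\hookrightarrow G$ is a bijection. Let $v^*$ be the apex of $G$ joined to both cliques $A,B$ (of size $k/2$ each), and set $u:=\phi^{-1}(v^*)$. Because $G-v^*$ has $A$ and $B$ as its two connected components, each component of $T-u$ must be embedded entirely into $A$ or entirely into $B$; hence every component has size $\le k/2$, and their sizes admit a partition into two groups each summing to $k/2$. Since $k$ is even, $n=k+1$ is odd, so $T$ has a unique centroid $u^*$. Rooting $T$ at $u^*$, the subtree at any non-centroid $u$ has size $s(u)\le k/2$ (as it is contained in a child-subtree of $u^*$ of size $\le k/2$), so the component of $T-u$ containing $u^*$ has size $n-s(u)\ge k/2+1$, contradicting the constraint above. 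Therefore $u=u^*$ is forced, and the proposition reduces to bounding
\[
p_k := \Pr\!\bigl[\text{the component sizes of $T-u^*$ admit a subset summing to $k/2$}\bigr],
\]
where $T$ is a uniformly random unlabelled tree of order $k+1$.

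\textbf{Step 2 (estimating $p_k$).} To bound $p_k$ I would appeal to the analytic combinatorics of Pólya trees (Otter, Flajolet--Sedgewick). An unlabelled tree with distinguished centroid is in bijection with an unordered multiset of rooted unlabelled subtrees of total size $k$, each of size at most $k/2$. Such configurations are controlled by the rooted-tree generating function $y(x)=\sum_{n\ge 1}t_n x^n$, with $t_n\sim c\,n^{-3/2}\gamma^{-n}$ and a square-root-type singularity at $x=\gamma$. The probability $p_k$ is then expressible as a ratio of coefficients of closely related generating functions, and the desired $O(k^{-1/2})$ bound matches the expected local-limit-theorem scale, where subtree sizes around the centroid fluctuate on order $\sqrt{k}$.

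\textbf{Main obstacle.} Step~1 is elementary; the substance lies in Step~2. The ``easy'' contribution to $p_k$ comes from the event that some subtree at $u^*$ has size exactly $k/2$, which a local limit theorem for Pólya trees bounds by $O(k^{-1/2})$. The delicate part is controlling nontrivial subset-sum coincidences---showing that with high probability the subtree sizes at the centroid are ``generic'' enough that no proper subset sums to $k/2$. Making this quantitative rigorously requires a careful singularity (or saddle-point) analysis of the multivariate generating function associated with the constrained multiset decomposition.
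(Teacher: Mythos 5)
Your Step~1 is correct and matches the paper's reduction: an embedding forces the apex to be the image of the unique centroid (uniqueness because $k+1$ is odd), so the question is exactly the probability that the component sizes of $T$ minus its centroid admit a bipartition into two parts each summing to $k/2$ --- the paper calls such trees \emph{balanced}.

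However, Step~2 is where your argument has a genuine gap: you outline a local-limit-theorem / saddle-point programme and then explicitly flag that the ``nontrivial subset-sum coincidences'' remain uncontrolled. The paper closes this by a single clean injection that makes all that machinery unnecessary. If $T$ is balanced, split the components of $T - u^*$ into two groups summing to $k/2$; attach $u^*$ as a root to each group. This yields an ordered pair of rooted unlabelled trees, each on exactly $k/2+1$ vertices, from which $T$ is recoverable (identify the two roots). Hence the number of balanced trees on $k+1$ vertices is at most $r_{k/2+1}^2$, where $r_n$ is the number of rooted unlabelled trees on $n$ vertices. Otter's asymptotics give $r_n = \Theta(n^{-3/2}B^n)$ and $s_n = \Theta(n^{-5/2}B^n)$ for the same constant $B$, so $r_{k/2+1}^2 = O(k^{-3}B^k)$ while $s_{k+1} = \Theta(k^{-5/2}B^k)$; the ratio is $O(k^{-1/2})$. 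Note this counts \emph{all} balanced trees at once --- it does not need to distinguish the ``one subtree of size exactly $k/2$'' case from the ``nontrivial subset-sum'' case that you identified as the delicate obstacle. To repair your proof you would either have to supply that missing injection or actually carry out the multivariate singularity analysis you gesture at, which is considerably heavier than the problem requires.
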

\begin{proof}
We at first classify trees on $k+1$ vertices that can be embedded in $G$. A vertex $u \in T$ is a centroid, if after removing it from $T$ we obtain a family of trees such that each tree is of size at most $k/2$.  Since the size of the graph is the same as the size of the tree that we embed, only a centroid of $T$ can be embedded in the vertex of $G$ complete to all other vertices. Since $k+1$ is odd, the centroid of the tree is unique. Hence, $T$ can be embedded if and only if the subtrees created after removing its centroid can be partitioned into two classes such that the number of vertices in each class is $k/2$. We call such trees \emph{balanced}. 


Let $r_k$ be the number of unlabelled rooted trees with $k$ vertices. A formula of Otter (see e.g. page 481 of \cite{Flajolet2009}) states that $r_k = \Theta( k^{-3/2} \cdot B^k )$ for some positive constant $B$. Similarly, the number of unlabelled unrooted trees $s_k$ is in $\Theta ( k^{-5/2} \cdot B^k )$ for the same constant $B$ (again page 481 of \cite{Flajolet2009}). 

Note that the number of balanced trees of order $k+1$ is at most $r_{k/2+1}^2$, since each such tree can be decomposed into two rooted trees with $k/2+1$ vertices each. Hence the number of balanced trees is in $O( k^{-3} B^k )$. Comparing this with the sequence $s_k$, we conclude that the probability that a random unlabelled tree is balanced goes to $0$ at a rate of at least $k^{-1/2}$. 
\end{proof}

\subsection{Remark about tightness of Theorem \ref{thm:localES_dense_skew}}

Although the condition on the maximum degree $\Delta(T)$ in Theorem \ref{thm:ES_dense_bounded} is probably not necessary, it is crucial for Theorem \ref{thm:localES_dense_skew}. We show in the following Claim that Theorem \ref{thm:localES_dense_skew} fails when we drop the assumption on the sublinear degree of $T$. 

\begin{claim}
Suppose that $0 < r < \frac13$. Then there is $\eta > 0$ such that there is a graph $G$ on $n$ vertices and a tree $T$ on $k$ vertices with the following properties. 
The minimum degree of $G$ is at least $rk + \eta n$ and it contains at least $\eta n$ vertices of degree $k + \eta n$. One colour class of $T$ contains at most $rk$ vertices. Finally, $G$ does not embed $T$. 
\end{claim}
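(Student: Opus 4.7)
The plan is to construct $G$ and $T$ explicitly and verify non-embedding by a short direct argument.

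For $G$: fix $\eta > 0$ small and arrange parameters so that $\eta n = 1$; concretely, choose positive integers $s, t, n, k$ satisfying $s \ge rk+1$, $n = ts+1 \ge k+2$, and set $\eta = 1/n$. Let $G$ be the vertex-disjoint union of $t$ cliques $K_1, \dots, K_t$ each of size $s$, together with one ``universal'' vertex $d$ adjacent to every other vertex. Then $d$ has degree $n-1 \ge k + \eta n$, each clique vertex has degree $s - 1 + 1 = s \ge rk + \eta n$, and $G$ has exactly one vertex of degree $\ge k + \eta n$, namely $d$. Since $\eta n = 1$, the hypotheses of Theorem~\ref{thm:localES_dense_skew} on $G$ are satisfied with the single high-degree vertex set $D = \{d\}$.

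For $T$: let $v \in V_2$ be a root with $m := \lceil rk \rceil$ children $w_1, \dots, w_m \in V_1$; attach to each of two distinguished middles $w_1, w_2$ a pendant star of $a_i$ leaves (all in $V_2$), where $a_1, a_2$ are positive integers with $a_1 + a_2 = k - 1 - m$ and $a_1, a_2 \ge s+1$. Leave the remaining $m-2$ middles as pendants of $v$ (no further children). The assumption $r < 1/3$ is precisely what makes this possible: it gives $(1-r)/2 > r$, so the total leaf count $k - 1 - m \approx (1-r)k$ can be split into two parts each exceeding $rk+1 = s$. Then $|V_1| = m \le rk$, the tree has exactly $k$ vertices, and $\Delta(V_2) = \deg_T(v) = m = \lceil rk \rceil$ is linear in $k$ and therefore exceeds $\gamma k$ for every fixed $\gamma < r$, violating the sublinear-max-degree hypothesis of Theorem~\ref{thm:localES_dense_skew}.

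To show $G$ does not embed $T$: suppose for contradiction $\phi\colon V(T) \hookrightarrow V(G)$ is an embedding. For each $i \in \{1,2\}$, the vertex $\phi(w_i)$ must have $a_i$ distinct $G$-neighbours to host the $a_i$ leaves attached to $w_i$. If $\phi(w_i) \in K_j$ for some $j$, then its $G$-neighbours lie inside $K_j \cup \{d\} \setminus \{\phi(w_i)\}$, a set of size only $s$; but $a_i \ge s+1$, a contradiction. Hence $\phi(w_i) \in \{d\}$ for both $i = 1, 2$, which forces $\phi(w_1) = \phi(w_2) = d$, violating injectivity of $\phi$.

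The main obstacle is simultaneous arithmetic: choosing $m$, $s$, $a_1$, $a_2$, $t$, $n$, $k$ so that all degree bounds hold, the tree size is exactly $k$, $|V_1| \le rk$, and each $a_i$ exceeds the size $s$ of a clique's full neighbourhood inside $G$. The strict inequality $r < 1/3$ is the essential quantitative input, since it guarantees there is enough room to accommodate two large leaf-stars of sizes each exceeding $rk+1$ while keeping the smaller colour class of $T$ of size at most $rk$.
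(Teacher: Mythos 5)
Your pigeonhole argument is correct as far as it goes, but there is one substantive gap: you couple $\eta$ to $n$ by setting $\eta = 1/n$. The sentence just before the Claim says it is meant to show that ``Theorem~\ref{thm:localES_dense_skew} fails when we drop the assumption on the sublinear degree of $T$.'' In Theorem~\ref{thm:localES_dense_skew}, $\eta$ (and $r$) are quantified \emph{before} $n_0$; a refuting example must therefore keep $\eta$ fixed (depending only on $r$) and supply graphs on arbitrarily many vertices, in particular on more than $n_0(\eta,r)$ of them. With $\eta = 1/n$ you only get a single $(n,G,T)$ per choice of $\eta$, and as $n\to\infty$ your $\eta\to 0$, so no fixed $\eta$ survives; once $n_0$ is chosen large relative to $\eta$, your examples never have $n>n_0$. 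The construction also does not repair itself easily: replacing the single universal vertex $d$ with $\eta n$ universal vertices (to restore a linear-in-$n$ supply of high-degree vertices) kills the pigeonhole step, since $\phi(w_1)$ and $\phi(w_2)$ may then map to \emph{distinct} universal vertices, each of which has degree about $n$ and easily hosts a star.

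The paper avoids this by taking $G$ to be two disjoint complete bipartite graphs $K_{rk+\eta n,\,k/2+\eta n}$ together with $\eta n$ extra vertices joined to both larger colour classes, and $T$ to be a vertex joined to the centres of $rk$ stars of size about $1/r$. There $\eta$ is fixed as a function of $r$ alone, $k$ is a fixed fraction of $n$, and the obstruction is a counting argument: $G$ is bipartite with small side of size $2rk+3\eta n < (1-r)k$, so the hub of $T$ must sit in the large side; then the hub's component can only accommodate about $k/2+\eta n$ of the roughly $(1-r)k$ leaves, with the $\eta n$ bridging vertices unable to carry enough across. This gives a contradiction uniform in $n$. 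A smaller point in your write-up: you take $m=\lceil rk\rceil$ children, which makes $|V_1| = \lceil rk\rceil > rk$ unless $rk$ is an integer; use $\lfloor rk\rfloor$ instead (the arithmetic still works out for $k$ large since $r<\tfrac13$).
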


\begin{figure}
    \centering
    \includegraphics[width=\textwidth]{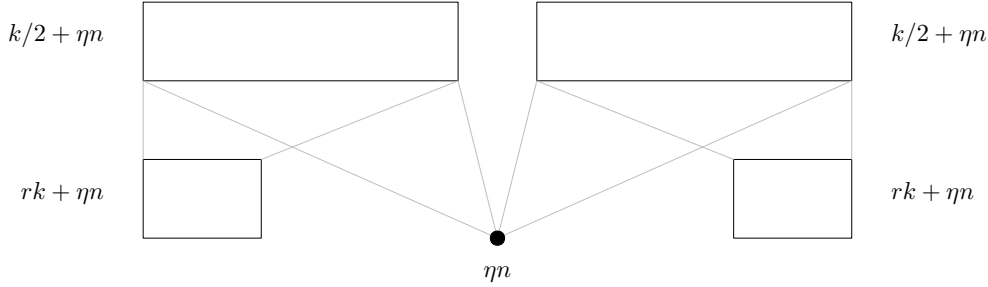}
    \caption{Example showing that the condition on bounded degree is needed in the statement of Theorem \ref{thm:localES_dense_skew}. }
    \label{fig:locales_example}
\end{figure}

\begin{proof}
Suppose $0 < r < 1/3$ and pick $\eta > 0$ to be sufficiently small depending on the value of $r$. 

Let $G$ be a graph on $n$ vertices consisting of two disjoint copies of complete bipartite graphs with colour classes of sizes $rk + \eta n$ and $k/2 + \eta n$. Moreover, $\eta n$ additional vertices are complete to both larger colour classes of the two bipartite graphs (see Figure \ref{fig:locales_example}). This implies that $k = \frac{1-5\eta}{1+2r}n$; for simplicity we do not address the rounding issues regarding $k$. 

Let $T$ be a tree on $k$ vertices consisting of a vertex $x$ complete to centres of $rk$ stars of sizes $\lfloor \frac1r \rfloor$ and $\lceil \frac1r \rceil$.

The smaller colour class of $T$ has size $rk$. Note that for fixed $r$ the maximum degree of this smaller colour class of $T$ is constant. However, it is not true for the larger colour class, hence Theorem \ref{thm:localES_dense_skew} does not apply. 
We claim that the tree $T$ is not contained in $G$ if we have chosen $\eta$ to be sufficiently small. 

Suppose that there is an embedding of $T$ in $G$. The graph $G$ is bipartite with one colour class of size at most $2rk+3\eta n$. Since $k$ is linear in $n$ and $r < \frac13$, we can choose $\eta$ small enough depending on $r$ so that this expression is less than $(1-r)k$. Hence, the vertex $x$ must be embedded in the larger colour class. Out of $(1-r)k - 1$ leaves at least $(1-r)k -1 - \eta n\cdot \lceil\frac{1}{r}\rceil > k/2 + \eta n$ have to be embedded in the same set of size $k/2 + \eta n$ as $x$, a contradiction with the assumption that $G$ embeds $T$. 
\end{proof}

Theorem \ref{thm:localES_dense_skew} is thus an example of an asymptotic result that does not seem to have a natural exact strengthening. 
On the other hand, we believe that the assumption on the sublinear maximum degree in Theorem \ref{thm:localES_dense_skew} can be dropped in the case $r=1/2$. This would mean that the dense asymptotic version of the \ES conjecture could be proven by this local approach.

\subsection{The constant $\frac12$ in Conjecture \ref{conj:dense_big_degree} cannot be improved}

The following example shows that the constant $\frac12$ in Conjecture \ref{conj:dense_big_degree} is best possible. 

Let $k > 1$ be an odd square and $T$ be a tree of order $k+1$ consisting of a vertex connected to centres of $\sqrt{k}$ stars on $\sqrt{k}$ vertices. 
Let $G$ be a graph consisting of two disjoint cliques of order $\frac{k-1}{2}$ and $\frac{k+1}{2}$, and an independent set of $\frac{\sqrt{k}-1}{2}$ vertices complete to both cliques. A simple calculation shows that the proportion of high degree vertices of $G$ is 
\[
    \frac{\frac{\sqrt{k}-1}{2}}{k+\frac{\sqrt{k}-1}{2}}
    < \frac{1}{2\sqrt{k}}\;. 
\]
On the other hand, note that for any $c<1$ the left hand side is larger than $\frac{c}{2\sqrt{k}}$ for sufficiently large $k$. We will check that $G$ does not contain $T$, which in turn shows that the expression $\frac{n}{2\sqrt{k}}$ in the conjecture cannot be strengthened to $\frac{cn}{2\sqrt{k}}$ for any $c<1$. 

Suppose that $G$ embeds $T$. If the central vertex of $T$ is embedded in the independent set, at least $\frac{\sqrt{k}+1}{2}$ stars neighbouring with that vertex have to be embedded in one of the cliques together with the independent set. 
This means that we have to embed at least $1 + \frac{\sqrt{k}+1}{2} \cdot \sqrt{k} = \frac{k}{2} + \frac{\sqrt{k}}{2} + 1$ vertices in part of $G$ consisting of at most $\frac{k+1}{2} + \frac{\sqrt{k}-1}{2} = \frac{k}{2} + \frac{\sqrt{k}}{2}$ vertices, which is not possible. 

Suppose that the central vertex of $T$ is embedded in one of the cliques. We can embed at most $\frac{\sqrt{k}-1}{2} \cdot (\sqrt{k}-1) = \frac{k-2\sqrt{k}+1}{2}$ vertices of $T$ in the other clique, since each vertex in the independent set enables us to embed $\sqrt{k}-1$ leaves in the other clique. 
We cannot use at least $\frac{k-1}{2} - \frac{k-2\sqrt{k}+1}{2} = \sqrt{k}-1$ vertices of the host graph for the embedding of $T$. Only $|V(G)| - (\sqrt{k}-1) = k + \frac{\sqrt{k}-1}{2} - (\sqrt{k} - 1) < k$ vertices in $G$ can be used to embed $T$, hence it is again not possible to embed $T$.

\section{The regularity method}
\label{sec:techniques}

In this section we state several preparatory results that will be later used for the proof of Theorem \ref{thm:localES_dense_skew}.

The basic idea of using the regularity lemma for embedding trees is that it is easy to embed trees when we know that the host graph is (pseudo)random, because then we can use its expansion properties. 
The regularity lemma (Subsection \ref{subsec:regularity}) enables us to partition the host graph into bounded number of clusters such that the edges between them are behaving in a pseudorandom fashion. We cannot exploit this property to easily embed the whole tree, but we may partition the tree into small subtrees (Subsection \ref{subsec:partitioning}) and then it is reasonably easy, though technical, to embed any such small subtree in basically any pair of clusters with nontrivial amount of edges between them (the second technical lemma in Subsection \ref{subsec:embedding_lemmas}). 

The problem that we are left with (proof in Section \ref{sec:proof}) is to embed the macroscopic structure of the tree that we get after its partitioning in the clusters of the host graph. This problem is quite similar to the problem that we started with; it is indeed tempting to think about this problem as of a fractional embedding, since we may embed several small subtrees in overlapping clusters subject to cardinality constraints. The reality is more complicated and technical, though, so we do not pursue this intuition later in the paper. 



\subsection{Notation}
\label{subsec:notation}
Throughout the paper we will use the following notation. 
The edge density of a bipartite graph with colour classes $X, Y$ is the fraction $\frac{e(X,Y)}{|X||Y|}$, where $e(X,Y) = |E(X,Y)|$ and $E(X,Y)$ is the set of edges with one endpoint in $X$ and the other in $Y$. The average degree is defined as $\adeg(X,Y) = \frac{e(X,Y)}{|X|}$. When we work with a fixed graph $G$, we use $V(G)$ and $E(G)$ to denote the set of its vertices or edges, respectively. For $X \subseteq V(G)$ we then also write $\adeg(X)$ instead of $\adeg(X, G\setminus X)$ -- this is the average degree of a vertex of $X$ in the subgraph of $G$ induced by edges between $X$ and $V(G) \setminus X$. 
The neighbourhood of a vertex $v$ is the set of vertices $N_G(v) = \{u \in G | \{u,v\} \in E(G)\}$. The neighbourhood of a set $S$ is $N_G(S) = \{u \in G | \exists v \in S: \{u,v\} \in E(G)\}$. If $T$ is a tree and $x,y \in T$, $\dist_T(x,y)$ is the length of the unique path between $x$ and $y$ in $T$, i.e., the number of edges on that path.     

\subsection{Regularity lemma}
\label{subsec:regularity}

We say that $(X,Y)$ is an {\em $\varepsilon$-regular pair}, if for every $X'\subseteq X$ and $Y'\subseteq Y$, $|X'|\geq \varepsilon |X|$ and $|Y'|\geq \varepsilon |Y|$ it holds that $|\dens(X',Y')-\dens(X,Y)| \leq \varepsilon$.


We say that a partition $\{\bf{v}_0,\bf{v}_1,\ldots, \bf{v}_m\}$ of $V(G)$ is an {\em $\varepsilon$-regular partition}, if $|\bf{v}_0|\leq \varepsilon |V(G)|$, and all but at most $\varepsilon m^2$ pairs $(\bf{v}_i,\bf{v}_j)$, $1 \le i < j \le m$, are $\varepsilon$-regular. Each set of the partition is called cluster. We call the cluster $\bf{v}_0$ the {\em garbage set}. We call a regular partition {\em equitable} if $|\bf{v}_i|=|\bf{v}_j|$ for every $1 \leq i<j\le m$.

\begin{theorem}[Szemerédi's regularity lemma]
\label{thm:regurality_lemma}
For every $\eps > 0$ there is $n_0$ and $M$ such that every graph of size at least $n_0$ admits an $\eps$-regular equitable partition $\{\bf{v}_0, \dots, \bf{v}_m\}$ with $1/\eps \le m \le M$. 
\end{theorem}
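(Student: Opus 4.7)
The plan is to follow the classical energy (or index) increment argument. Define the \emph{index} of a partition $\mathcal{P}=\{\bf{v}_1,\ldots,\bf{v}_m\}$ of $V(G)$ (temporarily ignoring the garbage set) by
\[
    q(\mathcal{P}) \;=\; \sum_{1\le i,j\le m} \frac{|\bf{v}_i|\,|\bf{v}_j|}{n^2}\, \dens(\bf{v}_i,\bf{v}_j)^2,
\]
where we set $\dens(\bf{v}_i,\bf{v}_i)=0$. Since densities lie in $[0,1]$ and the weights sum to at most $1$, one has $0\le q(\mathcal{P})\le 1$. The first and entirely routine step is to verify that refining a partition can only increase its index; this is a direct consequence of the Cauchy--Schwarz inequality applied cluster-by-cluster.

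The heart of the proof is the \emph{defect Cauchy--Schwarz} estimate: if $(X,Y)$ is a pair that fails to be $\eps$-regular, witnessed by subsets $X'\subseteq X$, $Y'\subseteq Y$ with $|X'|\ge\eps|X|$, $|Y'|\ge\eps|Y|$ and $|\dens(X',Y')-\dens(X,Y)|>\eps$, then replacing $X$ by the partition $\{X',X\setminus X'\}$ and $Y$ by $\{Y',Y\setminus Y'\}$ raises the contribution of the pair to $q$ by at least $\eps^4\,\frac{|X||Y|}{n^2}$. Summing this gain over the at least $\eps m^2$ irregular pairs, I would show that if $\mathcal{P}$ is not $\eps$-regular then there exists a refinement $\mathcal{P}'$, in which each $\bf{v}_i$ is split into at most $2^m$ pieces, with
\[
    q(\mathcal{P}') \;\ge\; q(\mathcal{P}) + \eps^5.
\]
This is the main obstacle of the argument, since the bookkeeping of simultaneously refining along many pairs has to be carried out carefully.

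Given the increment lemma, I would iterate starting from the trivial partition (or from any initial partition of a given size). Because $q\le 1$, the iteration must terminate after at most $\eps^{-5}$ steps, producing a partition that is $\eps$-regular. The bounded number of steps, together with the fact that each step multiplies the number of parts by at most $2^m$, gives the uniform upper bound $M=M(\eps)$ on the number of clusters via a tower-type recursion.

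It remains to arrange that the partition is \emph{equitable} and has a small garbage set. For this I would use the standard post-processing: start from a sufficiently fine equitable partition into $m_0 \ge 1/\eps$ pieces, apply the increment argument while at every stage re-partitioning each cluster into pieces of equal size (of some common size $t$) and sweeping the $O(m)$ leftover vertices into a garbage cluster $\bf{v}_0$. A routine calculation shows that throwing these leftovers away costs only $O(\eps)$ in the regularity parameters and in the index, so the argument still terminates and delivers an equitable $\eps$-regular partition with $1/\eps \le m \le M$, as claimed.
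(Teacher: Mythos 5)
The paper does not prove this statement at all: Szemerédi's regularity lemma is quoted as a classical black-box tool, with no proof supplied, so there is nothing internal to compare your argument against. Your sketch is the standard energy (index) increment proof, and it is essentially correct: the index functional, its monotonicity under refinement by Cauchy--Schwarz, the defect form giving an $\eps^4$-weighted gain on an irregular pair, the $\eps^5$ total gain over $\ge \eps m^2$ bad pairs, the $\eps^{-5}$-step termination, the $2^m$-fold blow-up per step yielding a tower-type $M(\eps)$, and the equitabilisation with a small garbage class are all as in Szemerédi's original argument and its usual textbook expositions. The only points you rightly flag as needing care, and which a complete write-up would have to nail down, are the bookkeeping when refining along many irregular pairs simultaneously (taking the common Venn refinement of all witness sets within each cluster, then proving the index still increases by $\eps^5$ after the additional equitable chopping) and the quantitative loss absorbed into the garbage class; these are genuine but standard technicalities, not gaps in the idea.
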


Given an $\varepsilon$-regular pair $(X,Y)$, we call a vertex $x\in X$ {\em typical} with respect to a set $Y'\subseteq Y$ if $\deg(x,Y')\ge  (\dens(X,Y)-\varepsilon)|Y'|$.
Note that from the definition of regularity it follows that all but at most $\varepsilon |X|$ vertices of $X$ are typical with respect to any subset of $Y$ of size at least $\varepsilon|Y|$.

\subsection{Partitioning trees}
\label{subsec:partitioning}

Here we state a crucial lemma from \cite{HladkyLKS4} that allows us to partition the tree in controllable number of small subtrees that we also informally call \emph{microtrees}. 
These trees are neighbouring with a set of vertices of bounded size consisting of vertices that we informally call \emph{seeds}. Moreover, we need to work separately with seeds from different colour classes of $T$. 

\begin{figure}
    \centering
    \includegraphics{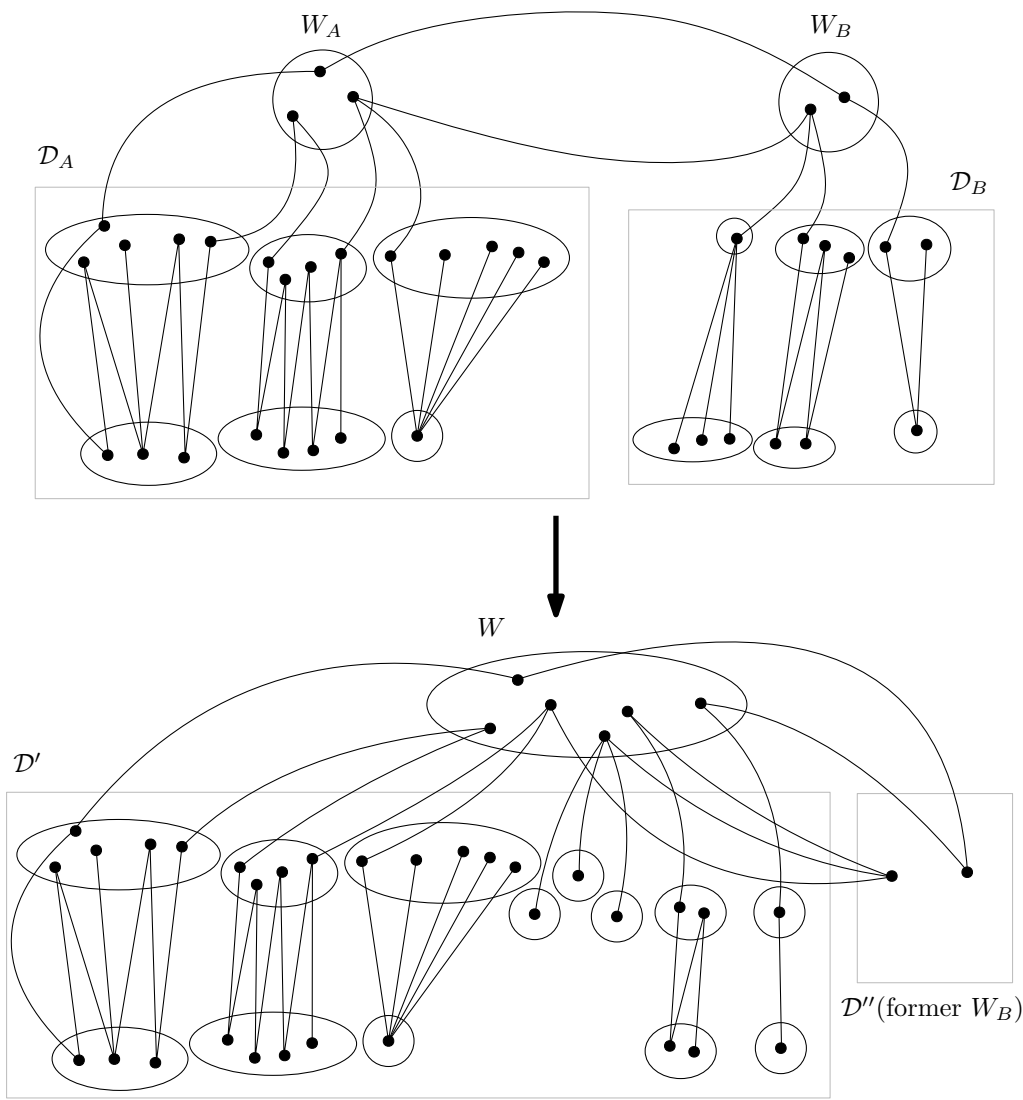}
    \caption{An $8$-fine partition of a tree and the respective one-sided $8$-fine partition of the same tree from the proof of Lemma \ref{lem:partition_bounded}. }
    \label{fig:partition}
\end{figure}

In the following definition, as well as in the example partition in the Figure \ref{fig:partition}, the set $W_A \cup W_B$ is the set of seeds of $T$ and the set $\fD_A \cup \fD_B$ is the set of its microtrees. 

\begin{definition}\cite[Definition 3.3] {HladkyLKS4}
\label{def:partition}
Let $T$ be a tree on $k+1$ vertices. An $\ell$-fine partition of~$T$ is a quadruple $(W_A,W_B, \mathcal D_A, \mathcal D_B)$, where $W_A,W_B\subseteq V(T)$ and $\mathcal D_A$ and $\mathcal D_B$ are families of subtrees of~$T$ such that
\begin{enumerate}
    \item  the sets $W_A$, $W_B$ and $\{V(K)\}_{K\in\mathcal D_A\cup
    \mathcal D_B}$ partition $V(T)$ (in particular, the trees in $K\in\mathcal D_A\cup
    \mathcal D_B$ are pairwise vertex disjoint),
    
    \item $\max\{|W_A|,|W_B|\}\leq 336k/{\ell}$,
    
    \item for $w_1,w_2\in W_A\cup W_B$ their distance in $T$ is odd if and only if one of them lies in $W_A$ and the other one in $W_B$,
    
    \item $|K| \leq \ell$ for every tree $K\in \mathcal D_A\cup
    \mathcal D_B$,
    
    \item for each $K \in \mathcal D_A$ we have $N_T(V(K)) \setminus V(K) \subseteq W_A$. Similarly for each $K \in \mathcal D_B$ we have $N_T(V(K)) \setminus V(K) \subseteq W_B$.  
    
    \item  $|N_T(V(K))\cap (W_A\cup W_B)|\le 2$ for each $K\in\mathcal D_A\cup \mathcal D_B$,
    
    \item if $N_T(V(K)) \cap (W_A \cup W_B)$ contains two vertices $z_1$, $z_2$ for some $K \in \fD_A \cup \fD_B$, then $\dist_T(z_1, z_2) \ge 6$. 
\end{enumerate}
\end{definition}

We did not list all properties of $\ell$-fine partition from~\cite{HladkyLKS4}, only those we need. 

\begin{lemma}\cite[Lemma~3.5]{HladkyLKS4}
    \label{lem:partition}
 	Let $T$ be a tree on $k+1$ vertices and let $\ell\in \mathbb{N}, \ell<k$. Then $T$ has an $\ell$-fine partition.
\end{lemma}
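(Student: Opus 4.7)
The plan is to construct an $\ell$-fine partition by a rooted bottom-up peeling procedure, followed by a few refinement passes to enforce the more delicate conditions of Definition~\ref{def:partition}.

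First, I would root $T$ at an arbitrary vertex $r$, and for each $v\in V(T)$ let $T_v$ denote the subtree of descendants of $v$. Call $v$ \emph{principal} if $|T_v|\ge \ell$ but $|T_c|<\ell$ for every child $c$ of $v$. Distinct principal vertices have vertex-disjoint descendant subtrees of size $\ge \ell$ (after discounting the subtree of any ancestor principal vertex), so there are at most $O(k/\ell)$ of them. Declaring each principal $v$ a seed and letting each pending subtree hanging below $v$ be a microtree already gives microtrees of size $<\ell$ with exactly one upper seed-neighbour, verifying property 4 and (trivially) property 6 for this first layer.

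Second, I would contract each principal vertex together with its dominated microtrees into a single super-vertex, producing a smaller tree $T'$ whose structure is captured by the principal seeds and the ``thread'' paths that connect them. Iterating the peeling on $T'$ and aggregating seeds across all levels produces $O(k/\ell)$ seeds in total; the residual threads between consecutive seeds are then cut into further microtrees of size $\le \ell$, each of which has at most two seed-neighbours (the top and bottom of the thread segment).

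Third, I would enforce properties 3 and 5 by letting $W_A$ and $W_B$ consist of the seeds that lie in the two colour classes of $T$, and then inserting an extra seed inside any microtree whose seed-neighbours straddle both colour classes, so as to split its boundary into two colour-homogeneous parts. Property 7 is enforced similarly: whenever a microtree has two seed-neighbours at $T$-distance $<6$, I would subdivide it by adding one further seed, spreading the two original seeds apart as seen from the resulting fragments. Each such micro-refinement multiplies the seed count by at most a universal constant, so the final bound $336k/\ell$ can be maintained.

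The hard part will be the amortised accounting showing that these refinement passes preserve the linear bound $O(k/\ell)$: the bipartition condition forces any two seed-neighbours of a microtree to lie at even $T$-distance, the distance-$\ge 6$ condition forbids clustering of seeds along short threads, and the two constraints must be met simultaneously. The key observation will be that the initial peeling already spaces seeds out by roughly $\ell$ along any root-to-leaf path, so only a bounded number of auxiliary seeds per original seed are ever needed, and the constant $336$ accommodates the cumulative blow-up from the colour-splitting and distance-spreading passes.
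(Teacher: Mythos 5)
The paper does not prove Lemma~\ref{lem:partition} itself; it cites it directly from~\cite{HladkyLKS4}, so there is no in-paper argument to compare against. On its own merits, however, your proposal has genuine gaps in the refinement passes, which are precisely the part you flag as ``the hard part.'' The fix you propose for properties 3 and 5 --- inserting a single extra seed $z_3$ inside a microtree $K$ whose two boundary seeds $z_1,z_2$ lie in opposite colour classes --- cannot work: if $z_3$ lies on the $z_1$--$z_2$ path in $K$, then $\dist_T(z_1,z_3)+\dist_T(z_3,z_2)=\dist_T(z_1,z_2)$ is odd, so exactly one of the two resulting fragments still has boundary seeds of opposite colour, and iterating only shifts the problem; if $z_3$ is off the path, the fragment containing the path retains \emph{both} $z_1$ and $z_2$ as boundary seeds and nothing is gained. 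The proposed fix for property 7 is worse: if $\dist_T(z_1,z_2)<6$, then inserting any seed $z_3$ on the $z_1$--$z_2$ path produces two fragments whose boundary seeds are at \emph{strictly smaller} $T$-distance, so subdivision propagates the violation rather than removing it. Enforcing the distance-$\ge 6$ condition inherently requires \emph{absorbing} short threads into the seed set or merging adjacent pieces --- operations that go in the opposite direction to cutting --- and this is what makes the bookkeeping genuinely delicate rather than a constant-factor blow-up.

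Your ``key observation'' that the initial peeling already spaces seeds by roughly $\ell$ along root-to-leaf paths is also false and cannot rescue the refinement passes. A principal vertex $v$ may reach subtree size $\ge\ell$ by having $\ell$ leaf children, so its subtree has height $1$; after contracting $v$'s subtree, the next principal vertex found by the iterated peeling can be $v$'s parent, at $T$-distance $1$ from $v$. Thus the raw peeling can place seeds at adjacent vertices, so neither the parity condition nor the distance-$\ge 6$ condition comes for free, and the amortised argument you defer to would need a fundamentally different mechanism (such as the careful choice of cut-points in the original proof of \cite[Lemma~3.5]{HladkyLKS4}) rather than the local subdivision you describe.
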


In the subsequent applications we are always working with $\ell = \beta k$ for some small $\beta > 0$. 
%
%
%
%
%

Since we work with trees with sublinear degree, we may further constrain the $\ell$-fine partition in such a way that all of its seeds are only from one colour class of $T$. We call this simpler structure a \emph{one-sided} $\ell$-fine partition. 

We at first define precisely the notion of a one-sided $\ell$-fine partition and in the subsequent lemma we observe that we may get the one-sided $\ell$-fine partition from $\ell$-fine partition by adding neighbours of seeds of $W_B$ to $W_A$ and then treating the former set $W_B$ similarly to the set $\fD_A \cup \fD_B$ (the lower part of Figure \ref{fig:partition}).

\begin{definition}
\label{def:partition_bounded}
Let $T$ be a tree on $k+1$ vertices and $V_1,V_2$ its colour classes. Let $\Delta = \max_{v \in V_2} \deg(v)$. A one-sided $\ell$-fine partition of~$T$ is a pair $(W, \mathcal D)$, where $W \subseteq V(V_1)$ and $\mathcal D$ is a family of subtrees of~$T$ such that
\begin{enumerate}
    \item  the sets $W$ and $\{V(K)\}_{K\in\mathcal D}$ partition $V(T)$,
    
    \item $|W|\leq 336k(1+\Delta)/{\ell}$,
    
    \item $|K| \leq \ell$ for every tree $K\in \mathcal D$,
    
    \item For each $K \in \mathcal D$ we have $N_T(V(K)) \setminus V(K) \subseteq W$. 
    
    \item 
    We can split $\fD$ into two subfamilies, $\fD = \fD' \sqcup \fD''$, in such a way that all trees from $\fD'$ have at most two neighbours $z_1, z_2 \in W$ such that $\dist_T(z_1, z_2) \ge 4$, while $|\fD''| \le 336k/\ell$ and every tree from $\fD''$ is a single vertex with at most $\Delta$ neighbours in $W$.  
\end{enumerate}
\end{definition}

\begin{lemma}
\label{lem:partition_bounded}
Let $T$ be a tree on $k+1$ vertices and let $\ell\in \mathbb{N}, \ell<k$. Then $T$ has a one-sided $\ell$-fine partition.
\end{lemma}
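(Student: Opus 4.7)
I would obtain the one-sided $\ell$-fine partition by starting from an $\ell$-fine partition $(W_A, W_B, \fD_A, \fD_B)$ supplied by Lemma~\ref{lem:partition} and then ``collapsing'' one side of the seed set into the family of microtrees. Property~3 of Definition~\ref{def:partition} forces $W_A$ and $W_B$ to lie in different colour classes of $T$, so after renaming if necessary I may assume $W_A\subseteq V_1$ and $W_B\subseteq V_2$. I then define
\[
    W := W_A \cup N_T(W_B)\subseteq V_1,
\]
and let $\fD$ be the set of connected components of the forest $T\setminus W$. I split $\fD = \fD' \sqcup \fD''$, where $\fD'' := \{\{w\} : w \in W_B\}$ (these are indeed singleton components, because every neighbour of any $w\in W_B\subseteq V_2$ lies in $V_1\cap N_T(W_B)\subseteq W$), and $\fD'$ collects the remaining components.

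Items~1, 3, and~4 of Definition~\ref{def:partition_bounded} are then immediate from the construction. Item~2 follows from $|W|\le |W_A|+\Delta\cdot|W_B|$ together with property~2 of the original partition. For item~5, the $\fD''$ half is immediate since $|W_B|\le 336k/\ell$ and $\deg_T(w)\le \Delta$ for every $w\in V_2$. The microtrees $K\in\fD_A$ are left untouched and remain in $\fD'$; properties~5--7 of the original partition directly hand them at most two $W$-neighbours at $T$-distance $\ge 6\ge 4$.

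The main obstacle will be verifying item~5 for the components of $\fD'$ that arise from cutting up a microtree $K\in\fD_B$. The key structural observation is that, since $T$ is acyclic and $K$ is a subtree, each external seed $z\in W_B$ of $K$ has exactly one neighbour inside $V(K)$; combined with property~6 of the original partition this means we delete at most two ``cut'' vertices $x,y\in V(K)$ when passing from $K$ to its restriction $K\setminus W$. A short tree-surgery analysis then shows that the components of $K\setminus\{x,y\}$ come in three types: subtrees hanging off $x$ (with $x$ as their only $W$-neighbour), subtrees hanging off $y$ (with $y$ as their only $W$-neighbour), and a single ``middle'' component containing the interior of the unique $x$--$y$ path in $K$ (with both $x$ and $y$ as $W$-neighbours). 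In the last case
\[
    \dist_T(x,y) = \dist_T(z_1,z_2) - 2 \ge 4
\]
by property~7 of the original partition, which is precisely the condition required by item~5. Assembling these observations, together with the bounds noted above, produces the desired one-sided $\ell$-fine partition of $T$.
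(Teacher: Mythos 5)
Your construction ($W := W_A \cup N_T(W_B)$, take the components of $T\setminus W$, with $\fD''$ the singletons coming from $W_B$) is exactly the paper's construction, and the verification of items 1--5, including the tree-surgery analysis showing that the ``middle'' component of a cut-up $K\in\fD_B$ inherits two $W$-neighbours $x,y$ with $\dist_T(x,y)=\dist_T(z_1,z_2)-2\ge 4$, matches the paper's (terser) argument. The proof is correct and takes essentially the same approach as the paper.
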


\begin{proof}
Let $(W_A, W_B, \fD_A, \fD_B)$ be an $\ell$-fine partition of $T$. Let $V_1, V_2$ be the partition of the vertices of $T$ into colour classes. Suppose that $W_B \subseteq V_2$. Let $W = W_A \cup N_T(W_B)$ and define $\fD$ as the set of trees of the forest $T \setminus W$. 
The conditions (1), (2), and (4) are clearly satisfied. 
Each vertex from $W_B$ is now a singleton tree in $\fD$. Define $\fD''$ as the family of these singleton trees and set $\fD' = \fD \setminus \fD''$. 
Each tree in $\fD''$ clearly satisfies the conditions (3) and (5). 
Each tree from $\fD'$ is either a tree from $\fD_A$, or a subtree of a tree from $\fD_B$, all such trees satisfy the condition (3). 
Finally recall that for each tree from $\fD_A \cup \fD_B$ with two neighbours $z_1$ and $z_2$ in $W_A \cup W_B$ we have $\dist_T(z_1, z_2) \ge 6$. Thus, all trees from $\fD_A$ satisfy the condition (5). Each tree from $\fD_B$ with two neighbours $z_1, z_2 \in W_B$ was split into one tree with two neighbours in $W$, such that their distance in $T$ is at least $4$, and maybe several other trees with only one neighbour in $W$. All such trees also satisfy (5). 
\end{proof}

\subsection{Embedding in regular pairs}
\label{subsec:embedding_lemmas}
In this section we present two embedding lemmas. The first will be used to embed the seeds of a one-sided partition, together with the set $\fD''$, in vertices of two neighbouring clusters. 

\begin{proposition}
\label{prop:embed_seeds}
For any $d, \beta, \eps > 0$, $\eps \le d^2/100$ there exist $k_0$ and $\gamma > 0$ such that the following holds. 

Let $T$ be a tree of order $k \ge k_0$ and $V_2$ one of its colour classes such that $\forall v \in V_2: \deg(v) \le \gamma k$. Moreover, let $(W,\fD), \fD=\fD'\sqcup\fD''$ be its one-sided $\beta k$-fine partition.  
Let $\bf{v}_1$ and $\bf{v_2}$ be two clusters of vertices of a graph $G$ forming an $\eps$-regular pair of density at least $d$. 
Suppose that $|\bf{v}_1| = |\bf{v}_2| \ge k / M_{T\ref{thm:regurality_lemma}}(\eps)$, where $M_\reg(\eps)$ is the output of the regularity lemma (Theorem \ref{thm:regurality_lemma}) with an input $\eps$. Let $U \subseteq \bf{v}_1, |U| \leq 2\sqrt{\eps}|\bf{v}_1|$. 
Then there is an injective mapping $\phi$ of $W \cup \left( \bigcup \fD'' \right)$ that embeds vertices of $W$ in $\bf{v}_1 \setminus U$ and vertices of $\bigcup \fD''$ in $\bf{v}_2$. 
\end{proposition}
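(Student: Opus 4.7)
The induced subgraph of $T$ on $W\cup \bigcup\mathcal D''$ is bipartite between $W\subseteq V_1$ and $\bigcup \mathcal D''\subseteq V_2$ by item~(4) of Definition~\ref{def:partition_bounded}, so the task reduces to finding an injective homomorphism of this bipartite tree fragment into the regular pair $(\bf{v}_1\setminus U,\bf{v}_2)$. The key structural fact is that the two sides are very unbalanced: $|W|\le 336(1+\gamma k)/\beta \le 700\gamma k/\beta$ for $k$ large, while every tree in $\mathcal D''$ is a single vertex by item~(5), so $|\bigcup \mathcal D''|\le |\mathcal D''|\le 336/\beta$ is a constant depending only on $\beta$. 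In particular, each $w\in W$ has at most $|S_w|:=|N_T(w)\cap\bigcup\mathcal D''|\le 336/\beta$ neighbours on the $V_2$ side.

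I would embed in two stages, exploiting that $\bigcup\mathcal D''$ has constant size. In \emph{Stage~1}, enumerate $\mathcal D''=\{v_1,\dots,v_t\}$ with $t\le 336/\beta$ and greedily pick images $\phi(v_1),\dots,\phi(v_t)\in\bf{v}_2$, maintaining the invariant that for every $L\subseteq\{v_1,\dots,v_j\}$ the common neighbourhood $A_L:=\bigcap_{v\in L}N_G(\phi(v))\cap \bf{v}_1$ satisfies $|A_L|\ge (d-\eps)^{|L|}|\bf{v}_1|$. At step $j$ this means picking $\phi(v_j)$ typical with respect to each of the at most $2^{j-1}$ sets $A_L$ for $L\subseteq\{v_1,\dots,v_{j-1}\}$; by the typical-vertex property from Subsection~\ref{subsec:regularity}, each such $L$ excludes at most $\eps|\bf{v}_2|$ candidates, and the invariant keeps all $A_L$ above $\eps|\bf{v}_1|$, so a union bound over the subsets together with the $\le t-1$ already used vertices still leaves room for a valid $\phi(v_j)$.

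In \emph{Stage~2}, embed $W$ into $\bf{v}_1\setminus U$ greedily. For each $w\in W$, a valid image must lie in $A_{S_w}\cap(\bf{v}_1\setminus U)$ minus the already-used images, a set of size at least
$(d-\eps)^{|S_w|}|\bf{v}_1|-2\sqrt{\eps}|\bf{v}_1|-|W|$.
The first term is at least $(d-\eps)^{336/\beta}|\bf{v}_1|$, a positive constant fraction of $|\bf{v}_1|$ depending only on $d,\beta,\eps$; the middle term satisfies $2\sqrt{\eps}\le d/5$ thanks to the hypothesis $\eps\le d^2/100$; and the last term is $O(\gamma k/\beta)$, which I make negligible against $|\bf{v}_1|\ge k/M_{T\ref{thm:regurality_lemma}}(\eps)$ by choosing $\gamma$ sufficiently small in terms of $d,\beta,\eps$. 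Hence the greedy step always succeeds and we obtain the required $\phi$.

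The delicate point is really Stage~1: the union bound over $2^t=2^{\Theta(1/\beta)}$ subsets needs $\eps$ to be small relative to $2^{-t}$, which is not explicitly forced by $\eps\le d^2/100$ alone. In the intended applications the $\eps$ fed to this proposition by the regularity lemma is much smaller than any fixed function of $\beta$, so this constraint is absorbed into the global parameter hierarchy; alternatively, one can replace the greedy Stage~1 by a random selection of $\phi(v_1),\dots,\phi(v_t)$ combined with a Chernoff-type concentration of each $|A_{S_w}|$ around its expectation $\ge(1-\eps)(d-\eps)^{|S_w|}|\bf{v}_1|$, which avoids the $2^t$ factor entirely.
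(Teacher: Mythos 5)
Your approach has a genuine gap, and it is exactly where you flagged it, but the escape route you propose does not work.

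The gap: in Stage~2 you need $A_{S_w}$, an intersection of up to $|S_w|\le |\fD''|\le 336/\beta$ neighbourhoods, to survive the subtraction of $U$, so you need $(d-\eps)^{336/\beta}$ to dominate $2\sqrt{\eps}$; in Stage~1 you need $(d-\eps)^{|L|}\ge \eps$ for every relevant $L$ in order to invoke typicality. Neither is implied by $\eps\le d^2/100$, because in the proposition $\beta$ is a \emph{free} parameter and $k_0,\gamma$ are the only quantities allowed to depend on it. Moreover, your claim that the parameter hierarchy of the application rescues this is backwards: in the proof of Theorem~\ref{thm:localES_dense_skew}, $\beta$ is chosen \emph{after} $\eps$ and satisfies $\beta\le \beta_{P\ref{prop:embed_regular_pair}}(d,\eps)=\eps/M_{\reg}(\eps)\ll\eps$, so $336/\beta\ge 336\,M_{\reg}(\eps)/\eps$ and $(d-\eps)^{336/\beta}$ is astronomically smaller than $\eps$, not larger. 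The randomised variant you suggest does not help either: the expectation of $|A_{S_w}|$ is of order $d^{|S_w|}|\bf{v}_1|$, which in the same regime is below $1$, so there is nothing for Chernoff to concentrate around. Also note that $|S_w|$ really can be as large as $|\fD''|$: the degree bound $\gamma k$ applies only to $V_2$, while $w\in W\subseteq V_1$ may be adjacent to all single-vertex trees of $\fD''$.

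The paper sidesteps the whole issue. It orders $W\cup\bigcup\fD''$ by a depth-first search of $T$ rooted at some $r\notin\bigcup\fD''$, and embeds the vertices one at a time in that order. The crucial consequence of DFS is that when a vertex $x$ is processed, at most one of its tree-neighbours -- its DFS parent -- is already embedded. So at every step one only needs to find a vertex in the neighbourhood of a \emph{single} already-placed vertex, avoiding $U$, the atypical vertices $U'$, and the $\le \tfrac{d}{4}|\bf{v}_1|$ vertices already used; a single application of typicality and $\eps\le d^2/100$ then suffices, with no intersections and no dependence of $\eps$ on $\beta$. You should restructure your argument around an ordering with this ``at most one embedded neighbour'' property rather than trying to control iterated common neighbourhoods.
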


\begin{proof}
Choose $\gamma, k_0 > 0$ such that 

\begin{align*}
    \gamma &= \frac{\beta d}{2000 M_{\reg}(\eps)},\\
    k_0 &= \frac{10}{\gamma}. 
\end{align*}

Note that in this case we have 

\begin{align*}
     \left\vert \bigcup D''\right\vert + |W|
     &\le \frac{336k}{\beta k} + \frac{336k(1+\gamma k)}{\beta k}\\
    &=\frac{336(\gamma k + 2))}{\beta}\\
    \JUSTIFY{$k \ge 10/\gamma$}&\le\frac{500\gamma k}{\beta}\\
    \JUSTIFY{definition of $\gamma$} &= \frac{500 \beta d k}{\beta \cdot  2000 M_{\reg}(\eps)} 
    = \frac{d k}{4M_{\reg}(\eps)} \\
    \JUSTIFY{$|\bf{v}_1| \ge k / M_{\reg}(\eps)$} &\le \frac{d}{4} | \bf{v}_1 | .\\
\end{align*}

Take an arbitrary vertex $r \not\in \bigcup\fD''$ of $T$ and root the tree at $r$. Order all vertices of $W \cup \left( \bigcup\fD''\right)$ according to an order, in which they are visited by a depth-first search starting at $r$. Let $U' \subseteq \bf{v}_1 \cup \bf{v}_2$ be the set of vertices of $\bf{v}_1$ not typical to $\bf{v}_2$ together with vertices of $\bf{v}_2$ not typical to $\bf{v}_1$. 
We will provide an algorithm that gradually defines a partial embedding $\phi$ of the vertices of $W \cup \left( \bigcup\fD''\right)$ such that $\phi(W) \subseteq \bf{v}_1 \setminus (U \cup U')$ and $\phi(\bigcup \fD'') \subseteq \bf{v}_2 \setminus U'$.  The situation is displayed in Figure \ref{fig:emb_lemma1}. 

\begin{figure}
    \centering
    \includegraphics{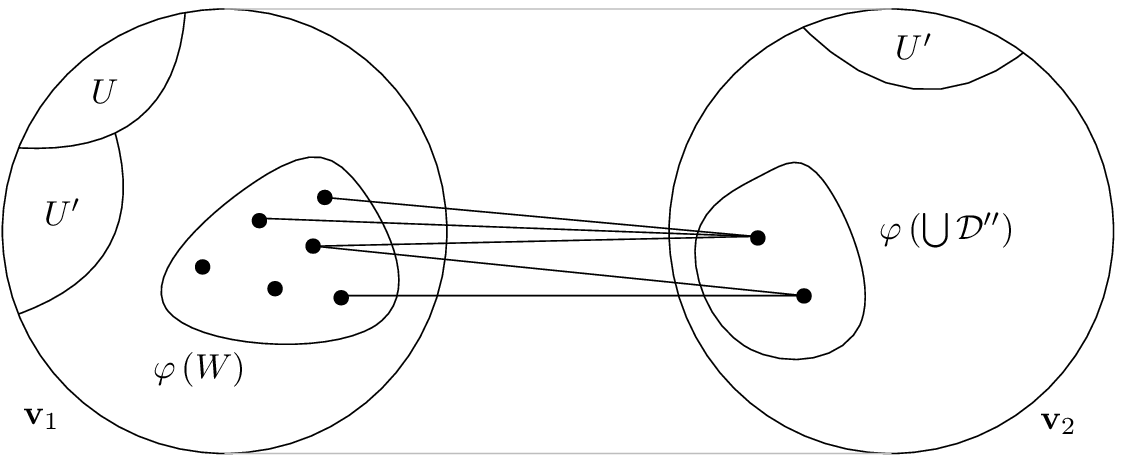}
    \caption{Embedding of $W$ and $\bigcup \fD''$ in the two clusters $\bf{v}_1$ and $\bf{v}_2$ in Proposition \ref{prop:embed_seeds}. }
    \label{fig:emb_lemma1}
\end{figure}

We iterate over the sequence $x_1, x_2, x_3, \dots$ of vertices from $W \cup \left( \bigcup\fD''\right)$, where the vertices are ordered by the depth-first search. In the $i$-th step we deal with the vertex $x=x_i$.
First we deal with the case $x \in W$. 

Suppose that $y \in \bigcup \fD''$ is the already embedded parent of $x$ (if $y \not \in \bigcup \fD''$, our task is simpler, since we do not have to embed $x$ in the neighbourhood of $\phi(y)$). 
We want to embed $x$ in an arbitrary neighbour of $y$ in $\bf{v}_1 \setminus (U \cup \phi(W) \cup U' )$. To do so, it suffices to verify that $N_G(y) \setminus (U \cup \phi(W) \cup U')$ is nonempty. This can be done with the help of the fact that $\phi(y)$ is typical to $\bf{v}_1$ and together with our bound $|W| \le \frac{d}{4}|\bf{v}_1|$:
\begin{align*}
 | N_G(y) \setminus (U \cup \phi(W) \cup U') | 
 \ge |\bf{v}_1| ( (d-\eps)  - 2\sqrt{\eps}  - \frac{d}{4} - \eps )
 > 0.
\end{align*}

Similarly, suppose that $x \in \bigcup \fD''$. From the definition of $\fD''$ we know that its parent $y$ is certainly in $W$ and $\phi(y)$ is typical to $\bf{v}_2$.  Now we similarly verify that 
\begin{align*}
 \left\vert N_G(y) \setminus \left(\phi \left(\bigcup \fD''\right) \cup U' \right) \right\vert 
 \ge |\bf{v}_2| ( (d-\eps)  - \frac{d}{4} - \eps )
 > 0.
\end{align*}

\end{proof}

Next, we state a similar proposition that enables us to embed small trees from a fine partition of $T$ in the regular pairs of the host graph. The proposition is a variation on a folklore result and is similar to e.g. Lemma 5 in \cite{Klimosova2018+}. 

\begin{proposition}
\label{prop:embed_regular_pair}
For all $0 < d, \eps \le 1$ such that $\eps < d^2 / 100$ there exists $\beta > 0$ such that the following holds. 

Let $\bf{v}_1, \bf{u}, \bf{v}$ be three clusters of vertices of a graph $G$ such that $\bf{v}_1\bf{u}$ and $\bf{uv}$ are $\eps$-regular pairs of density at least $d$. Let $v_1, v_2$ be two (not necessarily distinct) vertices of $\bf{v}_1$. 
Suppose that $|\bf{v}_1| = |\bf{u}| = |\bf{v}| \ge k / M_{\reg}(\eps)$. 
Let $K$ be a tree of order at most $\beta k$ and let $x_1, x_2$ be any of its two (not necessarily distinct) vertices from the same colour class of $K$. Suppose that $v_1 \not = v_2$ if and only if $x_1 \not = x_2$. 
Let $U$ be any subset of vertices of $\bf{u} \cup \bf{v}$ such that $|\bf{u} \setminus U| \ge 4 \sqrt{\eps} |\bf{u}|$ and $|\bf{v} \setminus U| \ge 4 \sqrt{\eps} |\bf{v}|$. 
Suppose that $|N_G(v_i)\cap (\bf{u}\setminus U)| \ge 3\eps|\bf{u}|$ for $i=1,2$. 


Then there is an injective mapping $\phi$ of $K$ in $\bf{u}\cup \bf{v}$ such that $\phi(V(K)) \cap U = \emptyset$. Moreover, $\phi(x_1) \in \bf{u}$ is a neighbour of $v_1$ and $\phi(x_2) \in \bf{u}$ is a neighbour of $v_2$. 
\end{proposition}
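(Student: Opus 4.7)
My plan is to embed $K$ by a standard greedy depth-first embedding rooted at $x_1$, enforcing the constraint at $x_2$ by enlarging the definition of ``typical vertex'' so that it already carries the information about the target neighbourhoods of $v_1$ and $v_2$.

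The first step is to choose $\beta$ so small that the entire tree $K$ fits comfortably inside a cluster; concretely, $\beta := \eps / M_{\reg}(\eps)$ makes $|K| \le \eps |\bf{u}|$. Assume without loss of generality that the colour class of $K$ containing $x_1, x_2$ is embedded into $\bf{u}$ and the other class into $\bf{v}$. I would write $\bf{u}' := \bf{u} \setminus U$ and $\bf{v}' := \bf{v} \setminus U$, both of size at least $4\sqrt{\eps}|\bf{u}|$, and put $N_i := N_G(v_i) \cap \bf{u}'$ for $i=1,2$, so that $|N_i| \ge 3\eps|\bf{u}|$ is large enough for $\eps$-regularity of the pair $(\bf{u},\bf{v})$ to apply to it. Then I would distinguish a set $T_{\bf{u}} \subseteq \bf{u}$ of vertices typical w.r.t.\ $\bf{v}'$, and a set $T_{\bf{v}} \subseteq \bf{v}$ of vertices simultaneously typical w.r.t.\ each of $\bf{u}'$, $N_1$, and $N_2$; by regularity these miss at most $\eps|\bf{u}|$ and $3\eps|\bf{v}|$ vertices of their respective clusters.

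The embedding itself is built in DFS order from $x_1$. I would first set $\phi(x_1)$ to any vertex of $N_1 \cap T_{\bf{u}}$, a nonempty intersection by the above counts. For every subsequent vertex $w$ with already-embedded parent $y$, I pick $\phi(w)$ as a neighbour of $\phi(y)$ lying in the appropriate typical-and-unused set ($T_{\bf{u}} \cap \bf{u}'$ if $w$ is in the colour class of $x_1$, otherwise $T_{\bf{v}} \cap \bf{v}'$), and additionally constrained to lie in $N_2$ when $w = x_2$. Since each $\phi(y)$ is by construction typical with respect to the relevant reference set (either $\bf{u}'$, $\bf{v}'$, or $N_2$), and the already-used image has size at most $|K| \le \beta k$, the candidate set is nonempty as long as $(d-\eps)\cdot 4\sqrt{\eps} > 4\eps$, which holds under the assumption $\eps < d^2/100$.

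The step I expect to be the main obstacle is the choice of $\phi(x_2)$: when DFS reaches $x_2$, the vertex $\phi(\mathrm{parent}(x_2))$ was chosen long before without knowledge of the final target $N_2$, and one still needs it to have many neighbours in $N_2$. I would resolve this by anticipating the constraint at the very beginning and folding typicality w.r.t.\ $N_1$ and $N_2$ into the definition of $T_{\bf{v}}$ itself; once this is done, $\phi(\mathrm{parent}(x_2)) \in T_{\bf{v}}$ automatically has at least $(d-\eps)|N_2| \ge 3\eps(d-\eps)|\bf{u}|$ neighbours in $N_2$, comfortably more than the at most $\eps|\bf{u}| + \beta k$ non-typical or used vertices to be excluded. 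The degenerate case $x_1 = x_2$ (forcing $v_1 = v_2$) makes $x_2$ the root and requires no extra argument.
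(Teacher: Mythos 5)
Your overall framework---choose $\beta$ so the tree is small, DFS-embed while staying on typical vertices, handle the $x_2$ constraint up front---is the same strategy as the paper's, and the routine steps are correctly estimated. But there is a genuine quantitative gap exactly at the step you yourself flagged as ``the main obstacle.''

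When DFS reaches $x_2$, you want $\phi(x_2) \in N_G(\phi(y)) \cap N_2 \cap T_{\bf{u}}$ minus the already-used image, where $y$ is the parent of $x_2$. Typicality of $\phi(y)$ with respect to $N_2$ gives $|N_G(\phi(y)) \cap N_2| \ge (d-\eps)|N_2| \ge 3\eps(d-\eps)|\bf{u}|$. But you must still exclude the up-to-$\eps|\bf{u}|$ vertices of $\bf{u}$ that are not in $T_{\bf{u}}$ (needed so the DFS can continue below $x_2$) and the up-to-$\beta k \le \eps|\bf{u}|$ used vertices. So your candidate set has size at least $\eps|\bf{u}|\bigl(3(d-\eps) - 2\bigr)$, which is positive only when $d - \eps > 2/3$. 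The hypothesis $\eps < d^2/100$ does not give this: for instance $d = 0.1$, $\eps = 10^{-5}$ is admissible and makes $3(d-\eps) \approx 0.3$. Shrinking $\beta$ further or dropping the requirement $\phi(x_2) \in T_{\bf{u}}$ (so you only subtract used vertices) still leaves you needing $3(d-\eps) > 1$, i.e.\ roughly $d > 1/3$, which is again not guaranteed. The root cause is that ``typical with respect to $N_2$'' only hands you a $(d-\eps)$-fraction of the small set $N_2$, and a $(d-\eps)$-fraction of a $3\eps$-fraction need not dominate an $\eps$-fraction when $d$ is small.

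The paper sidesteps this by not trying to reach $x_2$ with a single typicality step: it first embeds the $x_1$--$x_2$ path greedily up to $t_{r-2}$, then notes that $N_G(v_2) \cap (\bf{u}\setminus(U\cup U'))$ and $N_G(\phi(t_{r-2})) \cap (\bf{v}\setminus(U\cup U'))$ each have size at least $\eps|\bf{u}|$, and invokes $\eps$-regularity of the pair $(\bf{u},\bf{v})$ to find an actual edge between these two sets, mapping $t_{r-1}$ and $t_r = x_2$ to its endpoints. Regularity gives an edge whenever both sets have size $\ge \eps|\bf{u}|$, with no further loss of a $(d-\eps)$-factor, so this works for all $d > \eps$. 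That two-set density argument, rather than a one-vertex typicality argument, is the missing idea. If you want to keep your DFS framing, you could adopt the same device when you arrive two levels above $x_2$; the rest of your proof would then go through.
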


\begin{proof}
We show the proof for the harder case when $v_1 \not = v_2$. 
Choose 
\begin{align*}
\beta = \eps / M_\reg(\eps). 
\end{align*}
From this we get 
\begin{align*}
    |\bf{v}_1| 
    \ge \frac{k}{M_{\reg} (\eps)} = \beta \cdot \frac{M_\reg(\eps)}{\eps} \cdot  \frac{k}{M_{\reg} (\eps)} = \frac{\beta k}{\eps}.  
\end{align*}

Let $U'$ be the set of at most $\epsilon|\bf{u}|$ vertices of $\bf{u}$ that are not typical to $\bf{v} \setminus U$ (note that $|\bf{v} \setminus U| \ge \epsilon|\bf{v}|$) together with at most $\epsilon|\bf{v}|$ vertices of $\bf{v}$ that are not typical to $\bf{u} \setminus U$. The situation is displayed in Figure \ref{fig:emb_lemma2}. 

\begin{figure}
    \centering
    \includegraphics{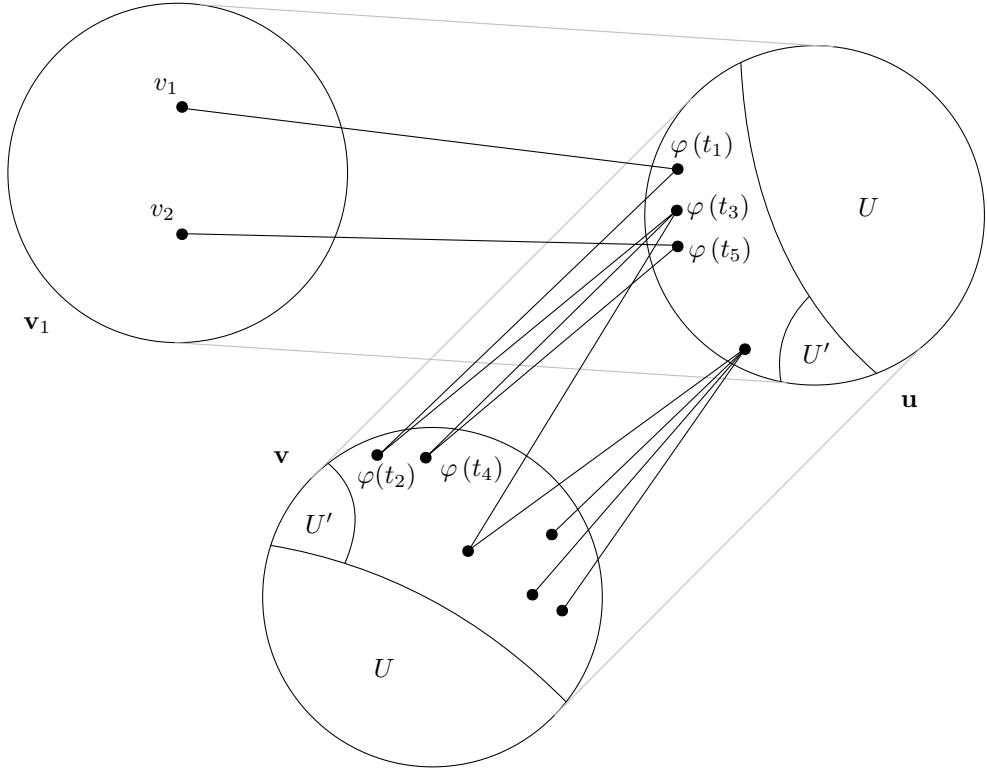}
    \caption{Embedding of a small tree $K$ in two clusters $\bf{u}$ and $\bf{v}$ in Proposition \ref{prop:embed_regular_pair}. In this case $r = 5$. }
    \label{fig:emb_lemma2}
\end{figure}

Observe that for each vertex $u \in \bf{u} \setminus (U \cup U')$ we have
\begin{align*}
    |N_G(u)\cap \left(\bf{v}\setminus (U \cup U')\right)|
    &\ge (d-\eps)|\bf{v}\setminus U|-\eps|\bf{v}|\\ 
    \JUSTIFY{$|\bf{v}\setminus U| \ge 4\sqrt{\eps}|\bf{v}|$}&\ge (d-\eps)4 \sqrt{\eps}|\bf{v}| - \eps|\bf{v}|\\
    \JUSTIFY{$d\gg \sqrt{\eps}$}&\ge \sqrt{\eps}\cdot 4 \sqrt{\eps} |\bf{v}| - \eps|\bf{v}| 
    \ge 2\eps |\bf{v}| \\
    \JUSTIFY{$|\bf{v}| \ge \beta k / \eps$}&\ge \eps|\bf{v}| + \beta k
    \ge \eps|\bf{v}| + |K| ,    
\end{align*}
and similar holds for any $u \in \bf{v} \setminus (U \cup U')$. This means that during embedding we may always find a neighbour of $u$ in $\bf{v} \setminus (U \cup U')$ that was not yet used for embedding. 

The same applies for both vertices $v_1, v_2$. We have
\begin{align*}
    |N_G(v_i) \cap \left(\bf{u}\setminus (U \cup U')\right)|
    &\ge |N_G(v_i) \cap (\bf{u}\setminus U)| - \eps |\bf{u}|\\
    \JUSTIFY{$|N_G(v_i)\cap (\bf{u}\setminus U)| \ge 3\eps|\bf{u}|$}
    &\ge 2\eps|\bf{u}|\\
    \JUSTIFY{$|\bf{u}| \ge \beta k / \eps$}&\ge \eps|\bf{u}|+\beta k
    \ge \eps|\bf{u}|+|K|.
\end{align*}

We start by embedding the path $t_1=x_1, t_2, \dots, t_r=x_2$ connecting $x_1$ with $x_2$ in $K$. We embed these vertices alternately in clusters $\bf{u}$ and $\bf{v}$. We embed $x_1$ in an arbitrary vertex of $N_G(v_1) \cap \left(\bf{u} \setminus (U \cup U')\right)$. 
Now for $i$ going from $2$ to $\ell-2$ we always map $t_{i}$ to a neighbour of $\phi(t_{i-1})$ not lying in $U \cup U'$. Observe that both $N_G(v_2) \cap \left(\bf{u}\setminus (U \cup U')\right)$ and $N_G(t_{r-2}) \cap \left( \bf{v}\setminus ( U \cup U')\right)$ have sizes at least $\eps|\bf{v}_1|$, thus there is an edge connecting those two neighbourhoods. Map $t_{r-1}$ and $t_{r}$ to the two endpoints of the edge. The rest of the tree can be then embedded in the greedy manner. 
\end{proof}

\section{Proof of Theorem \ref{thm:localES_dense_skew}}
\label{sec:proof}

In this section we prove Theorem \ref{thm:localES_dense_skew}. We split the proof into three parts. At first we preprocess the host graph by applying the regularity lemma and we partition the tree by applying Lemma~\ref{lem:partition_bounded}. In the second part we find a suitable matching structure in the host graph. In the last part we embed the tree in the host graph. 

\subsection*{Preprocessing the host graph and the tree}

Fix $\eta, r$. Suppose that $\eta < 1$. Choose $d, \eps, \beta, n_0$ such that
\begin{align*}
    d &= \frac{(\eta r)^2}{1000} ,\\
    \eps &=  \frac{(\eta r d)^{20}}{10^{15}}, \\
    \beta &= \min\left( \beta_{P\ref{prop:embed_regular_pair}}(d, \eps), \frac{\eta d}{10^{5}\cdot M_{\reg}(\eps)} \right),\\
    \gamma &= \gamma_{P\ref{prop:embed_seeds}}(d, \eps, \beta),\\
    n_0 &= \max\left(n_{0,\reg}(\eps), \frac{2}{\eta} k_{0, P\ref{prop:embed_seeds}}(d, \eps, \beta) \right).
\end{align*}

Let $G$ be a fixed graph on $n \ge n_0$ vertices with at least $\eta n$ vertices of degree at least $k + \eta n$ and with $\delta(G) \ge rk + \eta n$. Suppose that $k \ge \eta n/2$, otherwise we embed the tree $T$ greedily. 
We apply the regularity lemma (Theorem~\ref{thm:regurality_lemma}) on $G$ with $\eps_{\reg} = \eps$ and obtain an $\eps$-regular equitable partition $\bf{v}_0, \bf{v}_1, \dots, \bf{v}_m$ with $1/\eps \le m \le M_{\reg}(\eps)$ clusters. Each cluster has average degree at least $rk + \eta n$. 

Erase all edges within sets $\bf{v}_i$ of the partition, between irregular pairs, and between pairs of density less than $d$. 
We have erased at most $m\cdot \binom{n/m}{2} \le \frac{n^2}{m} \le \eps n^2$ edges withing the sets $\bf{v}_i$, at most $\eps m^2 \cdot (n/m)^2 = \eps n^2$ edges in irregular pairs, and at most $\binom{m}{2} \cdot d \cdot (n/m)^2 \le d \cdot n^2$ edges in pairs of low density. Erase the garbage set $\bf{v}_0$ and all of the at most $\eps n \cdot n$ incident edges.  
Note that we have erased at most $(3\eps + d)n^2$ edges. We abuse notation and still call the resulting graph $G$. 

Note that the quantity $\sum_{1 \le i \le m} |\bf{v}_i| \cdot \adeg(\bf{v}_i)$ dropped down by at most $(6\eps +2d)n^2$. Thus there are at most $\sqrt{6\eps + 2d}\cdot m$ clusters such that their average degree dropped down by more than $\sqrt{6\eps+2d}\cdot n$. Delete all such clusters and incident edges. We again call the resulting graph $G$. 
The average degree of each cluster of $G$ that was not deleted at first dropped by at most $\sqrt{6\eps+2d}\cdot n$. Then we erased at most $\sqrt{6\eps+2d} \cdot m$ clusters, so now it is at least $rk + \eta n - 2\cdot\sqrt{6\eps+2d} \cdot n > rk + \eta n/2$. 
Moreover, $G$ contains at least $(\eta - \eps - \sqrt{6\eps+2d}) n \ge \eta n/2$ vertices of degree at least $k + \eta n - 2\cdot\sqrt{6\eps+2d}\cdot n \ge k + \eta n/2 $. Hence, there exists a cluster, without loss of generality it is $\bf{v}_1$, such that the proportion of vertices of degree at least $k + \eta n/2$ in that cluster is at least $\eta /2 \ge \epsilon$. If we denote by $L$ this set of high degree vertices of $\bf{v}_1$, we have $\adeg(\bf{v}_1, \bf{v}_i) \ge \adeg(L, \bf{v}_i) - \eps |\bf{v}_i|$ from regularity of each pair $(\bf{v}_1, \bf{v}_i)$. This yields that $\adeg(\bf{v}_1) \ge \adeg(L) - \eps n \ge k + \eta n /3$. Moreover, if it is the case that $\adeg(\bf{v}_1) > 2k$, we erase several regular pairs with one endpoint in $\bf{v}_1$ so as to achieve $\adeg(\bf{v}_1) \le 2k$. After deletion the average degree of each cluster is still at least $k + \eta n /2 - n/m \ge k + \eta n /3$. 

The cluster graph $\bf G$ of $G$ is a graph such that its vertex set are the clusters of $G$ and between any pair of vertices $\bf{u}, \bf{v} \in \bf{G}$ there is an edge with weight $d(\bf{u}, \bf{v})$ if and only if $\bf{uv}$ is a regular pair with density $d(\bf{u}, \bf{v}) > 0$. 
Since we already deleted all irregular pairs and pairs with low density, for any edge in the cluster graph we have $d(\bf{u}, \bf{v}) \ge d$. 

We stick to using the boldface font whenever we may think about the corresponding object as a vertex or a set of vertices of $\bf G$.
We use $N_{\bf{G}}(\bf{v})$ to denote the set of clusters of $\bf{G}$ that are neighbours of $\bf{v}$ in $\bf{G}$, while $N_G(v)$ denotes the neighbourhood of a vertex $v$ of $G$. 

After preprocessing the host graph we turn our attention to the tree $T$.  Let $V_1, V_2$ be its colour classes such that $|V_1| \le rk$ and $\forall v \in V_2 : \deg(v) \le \gamma k$. We apply Lemma~\ref{lem:partition_bounded} with parameter $\ell_{L\ref{lem:partition_bounded}} = \beta k$ and obtain its one-sided $\beta k$-fine partition $(W,\fD), \fD=\fD'\sqcup\fD''$ such that $|W| \le 336(1+\gamma k)/\beta$ and $|\bigcup \fD''| \le 336/\beta$. Moreover, for each $K \in \fD'$ we have $|K| \le \beta k$ and for each $K \in \fD''$ we have $|K|=1$. 
Also note that $W \subseteq V_1$. 


\subsection*{Structure of the host graph}

We now find a suitable structure in the cluster graph $\mathbf{G}$ that will be used for the embedding of $T$. It suffices to look at the cluster $\mathbf{v}_1$, that will serve for the embedding of the seeds of $T$, and its neighbourhood. 

Let $\bf{M}$ a maximal matching in $N_{\bf{G}}(\bf{v}_1)$. We will denote by $\bf{M}$ both the graph and its underlying vertex set. 
Suppose that $\bf{uv} \in \bf{M}$. Note that from the condition on maximality we get that there cannot be two vertices $\bf{x} \not= \bf{y} \in N_{\bf{G}}(\bf{v}_1) \setminus \bf{M}$ such that both $\bf{xu}$ and $\bf{yv}$ are edges of $\bf G$. Thus there are two possibilities for each edge $\bf{uv}$; either only one of its endpoints has neighbours in $N_{\bf{G}}(\bf{v}_1) \setminus \bf{M}$, or both of its endpoints have just one neighbour in $N_{\bf{G}}(\bf{v}_1) \setminus \bf{M}$. We can get rid of the second special case as follows. 
For each vertex in $N_{\bf{G}}(\bf{v}_1) \setminus \bf{M}$ we either delete it if it is a common neighbour of at least $\eta m/40$ matching pairs, or we delete all edges in at most $2\cdot \eta m  / 40$ regular pairs connecting the vertex with these matching pairs. 
In this way we delete at most $40/\eta$ clusters and the degree of all remaining clusters of $\bf G$ drops down by at most $\eta m/20 \cdot |\bf{v}_1| + 40/\eta \cdot |\bf{v}_1| \le (\eta /20+40\eps/\eta)\cdot n \le \eta n/10$. 
We abuse notation and still call the resulting graph $\bf G$. The degree of $\bf{v}_1$ is at least $k+\eta n/3 - \eta n/10 \ge k + \eta n /5$ and the average degree of every cluster is similarly at least $rk + \eta n/5$. 
The matching $\bf{M}$ is still maximal in $N_{\bf{G}}(\bf{v}_1)$. 
Moreover, we can split the vertices of $\bf M$ into two colour classes, $\bf{M} = \bf{ M}_1 \cup \bf{M}_2$, in such a way that only clusters from $\bf{M}_2$ have neighbours in $N_{\bf{G}}(\bf{v}_1) \setminus \bf M$. 
Let $\bf{O}_1 = N_{\bf{G}}(\bf{v}_1) \setminus \bf{M}$. Note that it is an independent set. Define $\bf{O}_2 = N_{\bf{G}}(\bf{O}_1) \setminus \{ \{\bf{v}_1\} \cup \bf{M} \}$. Note that $N_{\bf{G}}(\bf{v}_1) \cap \bf{O}_2 = \emptyset$. Set $\bf{O} = \bf{O}_1 \cup \bf{O}_2$. All these sets are displayed in Figure \ref{fig:embedding}. 

\begin{figure}
    \centering
    \includegraphics[width=.8\textwidth]{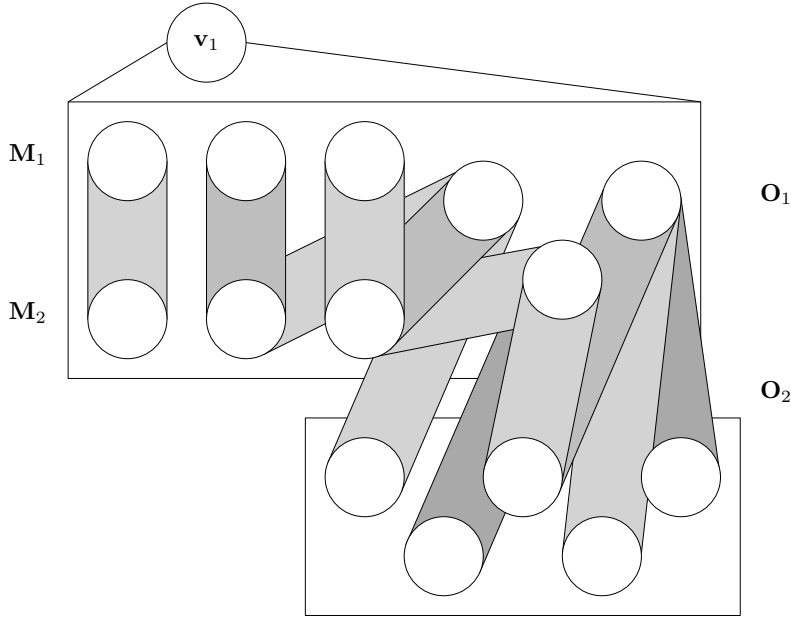}
    \caption{Cluster $\bf{v}_1$ and four sets of clusters $\bf{M}_1, \bf{M}_2, \bf{O}_1, \bf{O}_2$ that will be used for embedding. The regular pairs of different density are sketch by shades of grey (we omit pairs touching $\bf{v}_1$). }
    \label{fig:embedding}
\end{figure}
\subsection*{Embedding}
The final step of the proof is broken into three subparts. At first we give an overview of the method that we use for the construction of the mapping $\phi$. Then we formulate several preparatory technical claims. In the last part we propose the embedding algorithm. 

\subsubsection*{Overview}
We gradually construct an injective mapping $\phi$ from $T$ to $G$. In each step $\phi$ denotes the partial embedding that we already constructed. 
The idea behind the embedding process is very straightforward -- we will try to embed microtrees of $\fD$ inside the regular pairs in $\bf{M}$ and ‘through' the vertices of $\bf{O}_1$. We will, however, have to overcome several technical difficulties. 

One of the standard approaches of embedding trees (e.g. pursued in \cite{Klimosova2018+}) is to start by embedding the seeds of $T$ in vertices of two clusters (one for each colour class) such that the neighbourhoods of these special clusters are sufficiently rich. Moreover, we embed the seeds in such vertices that are typical to almost all neighbouring clusters. We then split the microtrees in $T$ into several subsets and embed each subset of microtrees in some part of the neighbourhood of the special clusters. 
Here we take a different approach. We start in the same way by embedding the seeds $W$ of $T$ in $\bf{v}_1$ -- a high degree cluster of $G$. We then propose an algorithm that iterates over clusters in the neighbourhood of $\bf{v}_1$, each time finding two clusters that can be used to embed a microtree. 

There are two main technical difficulties that we have to overcome. Recall that each seed is embedded in a vertex that is typical to almost all clusters. This means that when we choose a pair of clusters that will be used for the embedding, we have to find a microtree that has not yet been embedded such that its adjacent seeds are embedded in vertices typical to the first cluster from the pair. 
We can ensure that there will be such a microtree, unless the number of vertices that remain to be embedded, is very small, specifically $\sqrt[4]{\eps}k$. To ensure that we can embed the whole tree $T$, we at first allocate a small fraction of vertices $F \subseteq \bigcup \left( \bf{M} \cup \bf{O} \right)$ that we do not use for the embedding during the main embedding procedure. When only at most $\sqrt[4]{\eps}k$ vertices remain to be embedded, we finally embed this small proportion of trees in the set $F$. 

The second technical problem is that we cannot ensure that all the microtrees have the same skew. This complicates the main embedding procedure that would have been simpler in the case of microtrees with uniform skew. During the embedding procedure we behave against intuition and sometimes redefine the embedding of some microtrees. 


\subsubsection*{Preparations}

Note that there are at most $\sqrt{\eps} |\bf{v}_1|$ vertices of $\bf{v}_1$ that are not typical to more than $\sqrt{\eps}m$ clusters. 
Indeed, otherwise there would be more than $\sqrt{\eps} |\bf{v}_1| \cdot \sqrt{\eps}m = \eps m|\bf{v}_1|$ pairs of a cluster and a vertex not typical to it, which in turn implies the existence of a cluster such that more than $\eps |\bf{v}_1|$ vertices are not typical to it, a contradiction to the $\eps$-regularity. 
For each cluster $\bf{v} \in \bf{M}_1 \cup \bf{O}_1$ fix an arbitrary subset $F_{\bf{v}}$ of size $\lfloor \eta r d|\bf{v}|/300 \rfloor$. Since $|F_{\bf{v}}| \ge \epsilon |\bf{v}|$, we may apply the same reasoning to get that there are at most $\sqrt{\eps} |\bf{v}_1|$ vertices of $\bf{v}_1$ that are not typical to more than $\sqrt{\eps} m$ sets $\{ F_{\bf{v}_i} \}_{[2,m]}$.

We invoke Proposition~\ref{prop:embed_seeds} with parameters $d_{P\ref{prop:embed_seeds}} = d$, $\beta_{P\ref{prop:embed_seeds}} = \beta$, $\eps_{P\ref{prop:embed_seeds}} = \eps$. We also choose $\bf{v}_{2,{P\ref{prop:embed_seeds}}} = \bf{v}_2$ to be any cluster from the neighbourhood of $\bf{v}_{1,{P\ref{prop:embed_seeds}}} = \bf{v}_1$. 
Finally, we define the set $U_{P\ref{prop:embed_seeds}}$ to be the set of the at most $2\sqrt{\eps} |\bf{v}_1|$ vertices not typical to more than $\sqrt{\eps}m$ neighbouring clusters $\bf{v}_i$, or their subsets $F_{\bf{v_i}}$. 
Note that due to our initial choice of constants all the conditions from the statement of the proposition are satisfied. Hence we embed the vertices of $W$ in $\bf{v}_1$, while the vertices of $\bigcup \fD''$ will be embedded in $\bf{v}_2$. Moreover, each vertex from $W$ is embedded in a vertex typical to all but at most $\sqrt{\eps} m$ clusters $\bf{v}_i$ and their fixed subsets $F_{\bf{v}_i}$ of size $\lfloor \eta r d|\bf{v}_1|/300 \rfloor$.

Note that each microtree $K \in \fD'$ has at most two neighbours in $W$ (point five in Definition \ref{def:partition_bounded}). We call a cluster $\bf{u} \not= \bf{v}_1$ \emph{nice} with respect to $K \in \fD'$, if the at most two neighbours of $K$ in $W$ are embedded in vertices of $\bf{v}_1$ typical to $\bf{u}$. 
We will now, yet again, employ a doublecounting argument to observe that most of the clusters are nice to most of the trees from $\fD'$. 
We claim that there are at most $2\sqrt[4]{\eps} m$ clusters such that if we take all trees such that the cluster is not nice to them, then the union of all such trees contains more than $\sqrt[4]{\eps} k$ vertices. 

Suppose that it is not true. 
Note that each vertex from $W$ was mapped to a vertex that is typical to all but at most $\sqrt{\eps} m$ clusters, thus for each tree $K$ there are at most $2\sqrt{\eps} m$ clusters that are not nice to $K$. 
Consider all pairs consisting of a microtree from $\fD'$ and a cluster that is not nice to the tree. Moreover, each such connection shall be weighted by the size of the tree. Consider sum of all weights of all such pairs. 
Each tree $K$ contributes to the overall sum by at most $|K| \cdot 2\sqrt{\eps}m$, thus the overall sum is at most $k \cdot 2\sqrt{\eps}m$. On the other hand, if our claim were not true, the overall sum would be bigger than $2\sqrt[4]{\eps} m \cdot \sqrt[4]{\eps} k = k \cdot 2\sqrt{\eps}m$. 

Delete all clusters such that the union of all trees not nice to them has more than $\sqrt[4]{\eps} k$ vertices. If they are from $\bf{M}$, delete also their neighbours in $\bf{M}$. For simplicity also delete the cluster $\bf{v}_2$, because then we will not need to consider $\fD''$ in further calculations. 

Observe that the average degree of each cluster is still at least 
\begin{align*}
    rk& + \eta n/10 - 4\sqrt[4]{\eps} m  |\bf{v_1}| - |\bf{v_2}|\\
    \JUSTIFY{$m \ge 1/\eps$}&\ge rk + \eta n/10 - 4\sqrt[4]{\eps}n - \eps n\\ 
    \JUSTIFY{$\eps \ll \eta$}&\ge rk + \eta n/20. 
\end{align*} 
Similarly, the degree of $\bf{v}_1$ is still at least $\adeg(\bf{v}_1) \ge k + \eta n / 20$. We still call the new graph $\bf{G}$.
For each $\bf{u} \in N_{\bf{G}}(\bf{v}_1)$ it now holds that the number of vertices in microtrees such that $\bf{u}$ is not nice to them is at most $\sqrt[4]{\eps} k$. 

Our main embedding algorithm will work until less than $\sqrt[4]{\eps} k$ vertices of $T$ remain to be embedded. To embed the rest of the vertices of $T$, we now define a set $F$ that intersects each cluster in $\bf{M} \cup \bf{O}$ in a small fraction of vertices. 


\begin{claim}
\label{claim:localES_finishing}
There is a set $F\subseteq \bigcup \left( \bf{M} \cup \bf{O} \right)$ satisfying $|F| \le \eta r \adeg(\bf{v}_1)/100$, $  F_{\bf{u}} \subseteq F \cap \bf{u}$ for any $\bf{u} \in \bf{M}_1 \cup \bf{O}_1$ and $|F \cap \bf{u}| = |F \cap \bf{v}|$ for any $\bf{uv} \in \bf{M}$. 
Moreover, if we extend our partial mapping $\phi$ of $W \cup \bigcup\fD''$ in such a way that the extended mapping satisfies $\phi(T) \cap F = \emptyset$ and $\phi$ is defined on the whole $T$ except of some $\bar \fD \subseteq \fD$ with $|\bigcup \bar\fD| \le \sqrt[4]{\eps}k$, then we can injectively extend $\phi$ to the whole tree $T$. 
\end{claim}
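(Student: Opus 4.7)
The plan is to construct $F$ as a natural extension of the already-fixed subsets $F_{\bf{u}}$ and then, in a second pass, to embed each residual microtree $K \in \bar\fD$ via Proposition~\ref{prop:embed_regular_pair} applied to a suitable cluster pair lying inside $F$.

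For the construction of $F$ I would set $F \cap \bf{u} := F_{\bf{u}}$ for every $\bf{u} \in \bf{M}_1 \cup \bf{O}_1$, which gives property~2 directly. For each matching edge $\bf{uv} \in \bf{M}$ with $\bf{u} \in \bf{M}_1$ and $\bf{v} \in \bf{M}_2$ I would pick $F \cap \bf{v}$ to be any subset of $\bf{v}$ of cardinality $|F_{\bf{u}}|$, which equitability of the partition forces to be $\lfloor \eta rd|\bf{v}|/300\rfloor$; this gives property~3. For each $\bf{v} \in \bf{O}_2$ I would pick an analogous arbitrary subset of the same size. Summing over at most $m$ clusters and using $\adeg(\bf{v}_1) \ge k \ge \eta n/2$ and $d \le \eta/1000$ yields $|F| \le \eta rdn/300 \le \eta r \adeg(\bf{v}_1)/100$, establishing property~1.

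For the extension step I would suppose that $\phi$ is defined on $T \setminus \bigcup\bar\fD$, avoids $F$, and $|\bigcup\bar\fD| \le \sqrt[4]{\eps}k$, and then process the trees $K \in \bar\fD$ one by one. Each such $K$ lies in $\fD'$ and by Definition~\ref{def:partition_bounded}(5) has at most two neighbours $w_1, w_2 \in W$, embedded at $v_1, v_2 \in \bf{v}_1$; I identify the corresponding $K$-neighbours $x_1, x_2 \in V(K) \cap V_2$ (with $x_1 = x_2$ if there is only one seed neighbour). For each $K$ I would apply Proposition~\ref{prop:embed_regular_pair} to the triple $\bf{v}_1, \bf{u}, \bf{v}$, where $\bf{u} \in \bf{M}_1 \cup \bf{O}_1$ is chosen nice to $K$ and $\bf{v}$ is either the matching partner in $\bf{M}_2$ (if $\bf{u} \in \bf{M}_1$) or a $\bf{G}$-neighbour in $\bf{O}_2$ (if $\bf{u} \in \bf{O}_1$); the set $U$ of the proposition is $(\bf{u} \cup \bf{v}) \setminus F$ augmented with the $F$-vertices consumed by earlier microtrees. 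To keep the iteration under control I would impose a local budget of $|F \cap \bf{u}|/2$ vertices per pair. Since the aggregate budget is of order $\eta rdk$ while $\sum |K| \le \sqrt[4]{\eps}k$, and the choice $\eps = (\eta rd)^{20}/10^{15}$ makes $\sqrt[4]{\eps} \ll \eta rd$, only a negligible fraction of pairs can ever exhaust their budget; at the same time at most $2\sqrt{\eps}m$ clusters fail the niceness condition towards any fixed $K$, because each $\phi(w_i)$ is atypical to at most $\sqrt{\eps}m$ clusters. Hence a nice pair with leftover budget exists at every step.

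The main obstacle is verifying that the hypotheses of Proposition~\ref{prop:embed_regular_pair} persist throughout the iteration. The crucial inequality is $|N_G(v_i) \cap (\bf{u} \setminus U)| \ge 3\eps|\bf{u}|$: because $v_i$ was chosen typical to $F_{\bf{u}}$ via $U_{P\ref{prop:embed_seeds}}$, this neighbourhood has size at least $(d-\eps)|F_{\bf{u}}|$, and the local budget keeps at least half of $F_{\bf{u}}$ free, which remains comfortably above $3\eps|\bf{u}|$. The bounds $|\bf{u} \setminus U|, |\bf{v} \setminus U| \ge 4\sqrt{\eps}|\bf{u}|$ hold for the same reason. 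A secondary subtlety is that the typicality guarantees for $\phi(W)$ only involve the sets $F_{\bf{u}}$ on the $\bf{M}_1 \cup \bf{O}_1$ side, so the $\bf{u}$-role of the proposition must always be assigned there; the $\bf{v}$-side only needs the internal $\eps$-regularity of $\bf{u}\bf{v}$ at density at least $d$, which holds by construction.
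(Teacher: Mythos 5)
The overall strategy matches the paper's: fix $F$ from the pre-chosen sets $F_{\bf{u}}$ plus partner sets in $\bf{M}_2\cup\bf{O}_2$, then embed each $K\in\bar\fD$ via Proposition~\ref{prop:embed_regular_pair} through some pair $(\bf{u},\bf{v})$ under a per-pair budget, closing with an averaging argument over the at most $2\sqrt{\eps}m$ clusters whose $F_{\bf{u}}$-set is atypical for the relevant seeds. However, there is a genuine gap in the verification of the key hypothesis of Proposition~\ref{prop:embed_regular_pair}. You set the per-pair budget at $|F\cap\bf{u}|/2$ and then argue that, since at least half of $F_{\bf{u}}$ stays free, the inequality $|N_G(v_i)\cap(\bf{u}\setminus U)|\ge 3\eps|\bf{u}|$ holds. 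This is a non sequitur: what survives is half of $F_{\bf{u}}$, not half of $N_G(v_i)\cap F_{\bf{u}}$. Typicality of $\phi(z_i)$ to $F_{\bf{u}}$ only gives $|N_G(\phi(z_i))\cap F_{\bf{u}}|\ge(d-\eps)|F_{\bf{u}}|$, and since $d=(\eta r)^2/1000\ll 1/2$, consuming $|F_{\bf{u}}|/2$ vertices of $F_{\bf{u}}$ can wipe out the entire neighbourhood $N_G(\phi(z_i))\cap F_{\bf{u}}$. The budget must be taken on the scale of $d|F_{\bf{u}}|$ (the paper uses $\tfrac{d}{2}|F_{\bf{u}}|$), which gives $|N_G(\phi(z_i))\cap(F_{\bf{u}}\setminus\phi(T))|\ge(d-\eps-\tfrac{d}{2})|F_{\bf{u}}|\ge\tfrac{d}{3}|F_{\bf{u}}|\ge 3\eps|\bf{u}|$; your counting argument then still closes because $\eps=(\eta rd)^{20}/10^{15}$ makes $\sqrt[4]{\eps}\ll d\cdot\eta rd$.

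A secondary, smaller issue is your $F$-construction on the $\bf{O}$ side: you place a set of size $\lfloor\eta rd|\bf{v}|/300\rfloor$ in \emph{every} $\bf{v}\in\bf{O}_2$, whereas the paper designates a single partner set $G_{\bf{u}}$ inside one neighbour of each $\bf{u}\in\bf{O}_1$. With your version a single $\bf{v}\in\bf{O}_2$ may serve as the partner for many $\bf{u}\in\bf{O}_1$, and $F\cap\bf{v}$ can be exhausted even while each pair respects its own budget; the per-pair bookkeeping therefore needs to be done on the $\bf{v}$-side as well, or each $\bf{u}\in\bf{O}_1$ must be assigned a fixed partner as in the paper (for which you would also need the paper's argument that such a partner with enough room exists, using $\adeg(\bf{u},\bigcup(\bf{M}_2\cup\bf{O}_2))\ge rk>2|F|$).
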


\begin{proof}
In the first part of the proof we propose a suitable procedure defining $F$. Then we show that during the defining procedure the size of $F$ is bounded by $\eta r \adeg(\bf{v}_1)/100$ as desired in the statement of the claim. Finally we use this fact to argue that the defining procedure finishes only after defining the whole $F$. In the second part of the proof we use Proposition \ref{prop:embed_regular_pair} to embed a small set of trees $\bar \fD$ in $F$. 

We define $F$ as follows. For each $\bf{u} \in \bf{M}_1 \cup \bf{O}_1$ we add $F_\bf{u}$ to $F$. 
Then for each set $F_\bf{u}$ we find a set of the same size in some neighbour $\bf{v}\not=\bf{v}_1$ of $\bf{u}$ and also add this set to $F$. 
We call this set $G_\bf{u}$ and find it as follows. For $\bf{uv} \in \bf{M}$ we take $G_\bf{u} = F_\bf{v}$. For $\bf{u} \in \bf{O}_1$ we find its neighbouring cluster in $\bf{O}_2 \cup \bf{M}_2$ with at least $\lfloor \eta r d|\bf{u}|/300 \rfloor$ vertices that were not yet added to $F$ and we set $G_\bf{u}$ to be this set (we argue later, why we always find a suitable neighbouring cluster, i.e., why this defining procedure cannot finish sooner than required). 
In the case when $F_\bf{u} \subseteq \bf{u} \in \bf{O}_1$, but $G_\bf{u} \subseteq \bf{v'} \in \bf{M}_2$, it is no longer true that $|F\cap\bf{u'}| = |F\cap\bf{v'}|$ for some matching edge $\bf{u'v'} \in \bf{M}$. We reestablish the condition by adding $\lfloor \eta r d |\bf{u}'|/300 \rfloor$ vertices from $\bf{u'}$ to $F$. 

During the defining procedure and after it finishes we have
\begin{align*}
    |F| 
    \le 3 \cdot \sum_{\bf{u} \in \bf{M}_1 \cup \bf{O}_1} \lfloor \eta r d|\bf{u}|/300 \rfloor 
    \le \eta r\adeg(\bf{v}_1)/100. 
\end{align*}

Now we argue that each cluster $\bf{u} \in \bf{O}_1$ has a neighbour in $\bf{M}_2 \cup \bf{O}_2$ with at least $\lfloor \eta r d|\bf{u}|/300 \rfloor $ vertices that are not yet in $F$. 
Since we know that 
\begin{align*}
    \adeg(\bf{u}, \bigcup\left(\bf{M}_2\cup \bf{O}_2\right)) 
    &\ge rk \\
    \JUSTIFY{$|F|/r \ll \adeg(\bf{v}_1) \le 2k$}&> 2|F| > 2\adeg(\bf{u}, F), 
\end{align*}
there is certainly a cluster $\bf{v}\in \bf{M}_2 \cup \bf{O}_2$ such that $\adeg(\bf{u}, \bf{v})/2 > \adeg(\bf{u}, F \cap \bf{v})$, hence $\adeg(\bf{u}, \bf{v} \setminus F) = \adeg(\bf{u}, \bf{v}) - \adeg(\bf{u}, \bf{v} \cap F) > \adeg(\bf{u}, \bf{v})/2 \ge d|\bf{v}|/2$, meaning that there is a subset of at least $d|\bf{v}|/2 > \lfloor \eta r d|\bf{v}|/300 \rfloor$ vertices in $\bf{v}$ that can be used to define $G_\bf{u}$.  

It remains to show how to embed any $\bar \fD$ of small size in $F$. We define the embedding $\phi$ of all trees $K \in \bar \fD$ in a step-by-step manner. Suppose that $\bf{u} \in \bf{M}_1 \cup \bf{O}_1$ and $G_\bf{u} \subseteq \bf{v}$.  
If the at most two neighbours $z_1, z_2$ of $K$ in $W$ are embedded to two vertices of $\bf{v}_1$ that are typical to set $F_\bf{u}$ and, moreover, $|\phi(T) \cap F_\bf{u}| \le \frac{d}2|F_\bf{u}|$ and $|\phi(T) \cap G_\bf{u}| \le \frac{d}2|G_\bf{u}|$, we can compute that for $i=1,2$ we have
\begin{align*}
    |F_\bf{u}\setminus \phi(T)|
    \ge \left(1-\frac{d}{2}\right)|F_\bf{u}|
    &\ge \frac12 \lfloor \eta r d|\bf{u}|/300 \rfloor\\
    \JUSTIFY{$\epsilon \ll (\eta r d)^2$}&\ge 4\sqrt{\eps}|\bf{u}|
\end{align*}
and similarly $|G_\bf{u}\setminus \phi(T)| \ge 4\sqrt{\eps}|\bf{v}|$. We also have
\begin{align*}
    |N_G(v_i)\cap(F_\bf{u}\setminus\phi(T))|
    &\ge |N_G(v_i)\cap F_\bf{u}| - |\phi(T)\cap F_\bf{u}|\\
    \JUSTIFY{$v_i$ is typical to $F_\bf{u}$}&\ge (d-\eps)|F_\bf{u}| - |\phi(T)\cap F_\bf{u}|\\
    \JUSTIFY{$\eps \ll d$,\; $|\phi(T) \cap F_\bf{u}| \le d|F_\bf{u}|/2$}&\ge \frac{d}{3} |F_\bf{u}|
    \ge \frac{d}{3} \lfloor \eta r d|\bf{u}|/300 \rfloor\\
    \JUSTIFY{$\epsilon \ll (\eta r d)^2$}&\ge 3\eps|\bf{u}|. 
\end{align*}
Hence in this case we can use Proposition~\ref{prop:embed_regular_pair} with parameters $U_{P\ref{prop:embed_regular_pair}} = (V(G) \setminus F) \cup \phi(T)$, $d_{P\ref{prop:embed_regular_pair}} = d$, $\eps_{P\ref{prop:embed_regular_pair}} = \eps$, $\beta_{P\ref{prop:embed_regular_pair}} = \beta$, $\bf{v}_{1,{P\ref{prop:embed_regular_pair}}} = \bf{v}_1$, $\bf{u}_{P\ref{prop:embed_regular_pair}} = \bf{u}$, $\bf{v}_{P\ref{prop:embed_regular_pair}} = \bf{v}$, $K_{P\ref{prop:embed_regular_pair}} = K$, $v_{1,{P\ref{prop:embed_regular_pair}}}= \phi(z_1)$, $v_{2,{P\ref{prop:embed_regular_pair}}} = \phi(z_2)$. The proposition then allows us to embed $K$. 

Now it suffices to show that for any $K$ we always find a suitable $\bf{u}$ such that $\phi(z_1), \phi(z_2)$ are typical to $F_\bf{u}$ and both $F_\bf{u}$ and $G_\bf{u}$ do not contain many embedded vertices of $T$. Recall that vertices $\phi(z_1), \phi(z_2)$ are typical to all but at most $\sqrt{\eps} m$ sets $F_\bf{u}$. 
If we cannot use for the embedding any other set $F_\bf{u}$ from remaining clusters of $\bf{M}_1 \cup \bf{O}_1$, it means that we have embedded more than $\frac{d}{2} \cdot\lfloor  \eta r d|\bf{v}_1|/300 \rfloor$ vertices to this set $F_\bf{u}$, or we have embedded at least the same number of vertices in the appropriate set $G_\bf{u}$. This means that the number of vertices we have embedded is at least 
\begin{align*}
    \left(|\bf{M}_1 \cup \bf{O}_1| - 2\sqrt{\eps}m \right) \cdot \left(\frac{d}{2} \cdot \lfloor \eta r d |\bf{v}_1|/300 \rfloor \right)
    &\ge \left(\frac{|\bf{M} \cup \bf{O}_1|}{2} - 2\sqrt{\eps}m \right) \cdot \frac{d^2 r \eta}{700}|\bf{v}_1|\\
    \JUSTIFY{$\adeg(\bf{v}_1) \le |\bf{M} \cup \bf{O}_1|\cdot |\bf{v}_1|$}&\ge \left( \frac{\adeg(\bf{v}_1)}{2} - 2\sqrt{\eps}m|\bf{v}_1|  \right) \cdot \frac{d^2 r \eta}{700} \\
    \JUSTIFY{$\eps \ll d^{10} r^5 \eta^5$}&\ge \left( \frac{\adeg(\bf{v}_1)}{2} - 2\sqrt{\eps}m|\bf{v}_1|  \right) \cdot \sqrt[5]{\eps} \\
    \JUSTIFY{$m|\bf{v}_1| \le n$}&\ge \left( \frac{k}{2} - 2\sqrt{\eps}n  \right) \cdot \sqrt[5]{\eps} \\
    \JUSTIFY{$k \ge \eta n/2$}&\ge \left(\frac12 - \frac{4\sqrt{\eps}}{\eta} \right) \sqrt[5]{\eps} k\\
    &> \sqrt[4]{\eps} k,
\end{align*}
a contradiction. 
\end{proof}

\subsubsection*{Embedding algorithm}

So far we have embedded the set $W$ in vertices of $\bf{v}_1$ that are typical to almost all clusters in the neighbourhood of $\bf{v}_1$.  We also embedded the small set $\fD''$ in $\bf{v}_2$. We denote this partial embedding by $\phi_0$ and we use the symbol $\phi$ to denote the successive extensions of $\phi_0$ that we will construct. 

We invoke Claim~\ref{claim:localES_finishing} to get a small set $F$. Now we will gradually embed microtrees from $\fD$ in $\bigcup \left(\bf{M} \cup \bf{O}\right) \setminus F$, until the number of vertices of microtrees that were not embedded yet is at most $\sqrt[4]{\eps} k$. Then we embed the remaining parts of $T$ in $F$ using Claim~\ref{claim:localES_finishing}. 

We will use the following notation for the sake of brevity.  
\begin{definition}
    Let $\phi$ be a fixed partial embedding of $T$ in $G$ extending $\phi_0$.
    We say that a cluster $\bf{u}$ is full, if 
    \begin{align*}
        |\bf{u} \cap \left( \phi(V(T)) \cup F  \right)| \ge \abs{\bf{u}} - 4\sqrt{\eps} \abs{\bf{u}}. 
    \end{align*}
    We say that a cluster $\bf{u} \in N_{\bf{G}}(\bf{v}_1)$ is saturated, if 
    \begin{align*}
        |\bf{u} \cap \left( \phi(V(T)) \cup F  \right)|\ge \adeg(\bf{v}_1, \bf{u}) - 4\sqrt{\eps} \abs{\bf{u}}. 
    \end{align*}
    We say that a matching edge $\bf{uv} \in \bf{M}$ is saturated, if 
    \begin{align*}
        |\left( \bf{u} \cup \bf{v} \right) \cap \left( \phi(V(T)) \cup F  \right)| \ge \adeg(\bf{v}_1, (\bf{u} \cup \bf{v})) - 8\sqrt{\eps} \abs{\bf{u}} - \beta k. 
    \end{align*}
\end{definition}

Note that every full cluster is also saturated. The intuition behind these definitions will be clear from the statements of the following claims. Recall that $V_1, V_2$ denote the two colour classes of $T$ and that $W \subseteq V_1$. 

\begin{claim}
\label{claim:localES_embed}
Let $\phi$ be a partial embedding of $T$ in $G$ extending $\phi_0$. 
If $\bf{u} \in N_{\bf{G}}(\bf{v}_1)$ is not saturated and $\bf{v} \in N_{\bf{G}}(\bf{u}) \setminus \{ \bf{v}_1 \}$ is not full, then, unless $\phi$ satisfies $|\dom(\phi)| \ge k - \sqrt[4]{\eps}k$, we may extend $\phi$ injectively to some $K \in \fD'$ that was not yet embedded in such a way that $\phi(V(K) \cap V_2) \subseteq \bf{u}$, $\phi(V(K) \cap V_1) \subseteq \bf{v}$, and $\phi(V(K)) \cap F = \emptyset$. 
\end{claim}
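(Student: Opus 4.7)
The plan is to locate a microtree $K \in \fD'$ that has not yet been embedded and to which the cluster $\bf{u}$ is \emph{nice} (in the sense defined in the preparations), and then to embed $K$ into the regular pair $(\bf{u}, \bf{v})$ via Proposition~\ref{prop:embed_regular_pair}, using $\bf{v}_1$ as the anchor cluster that carries the images of the $W$-neighbours of $K$.

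First I would locate the microtree. Let $\fD'_{\bf{u}} \subseteq \fD'$ be the set of trees to which $\bf{u}$ is nice. By the doublecounting argument recorded in the preparations, $\bigl| \bigcup (\fD' \setminus \fD'_{\bf{u}}) \bigr| \le \sqrt[4]{\eps}\,k$. Since $W \cup \bigcup \fD''$ is already embedded by $\phi_0$, if every tree in $\fD'_{\bf{u}}$ were also embedded then $|\dom(\phi)| \ge k - \sqrt[4]{\eps}\,k$; under the hypothesis of the claim, some $K \in \fD'_{\bf{u}}$ is therefore unembedded. Fix such a $K$, let $z_1, z_2 \in W$ be its at most two neighbours in $W$ (possibly $z_1 = z_2$), and let $x_i \in V(K)$ be a neighbour of $z_i$ inside $K$. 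Since $W \subseteq V_1$, both $x_i$ lie in the colour class $V_2$; when $z_1 \neq z_2$, condition~(5) of Definition~\ref{def:partition_bounded} forces $\dist_T(z_1, z_2) \ge 4$ and hence $x_1 \neq x_2$, and when $z_1 = z_2$ we pick $x_1 = x_2$, matching the setup of Proposition~\ref{prop:embed_regular_pair}.

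It then remains to verify the quantitative hypotheses of Proposition~\ref{prop:embed_regular_pair}, which I would invoke with $\bf{v}_{1,P\ref{prop:embed_regular_pair}} = \bf{v}_1$, $\bf{u}_{P\ref{prop:embed_regular_pair}} = \bf{u}$, $\bf{v}_{P\ref{prop:embed_regular_pair}} = \bf{v}$, $v_i = \phi(z_i)$, $K_{P\ref{prop:embed_regular_pair}} = K$, and $U_{P\ref{prop:embed_regular_pair}} = F \cup \phi(V(T))$. Non-fullness of $\bf{v}$ gives $|\bf{v} \setminus U| \ge 4\sqrt{\eps}|\bf{v}|$ directly, and non-saturation of $\bf{u}$ combined with $\adeg(\bf{v}_1, \bf{u}) \le |\bf{u}|$ yields $|\bf{u} \setminus U| \ge 4\sqrt{\eps}|\bf{u}|$. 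The final and most essential bound $|N_G(\phi(z_i)) \cap (\bf{u}\setminus U)| \ge 3\eps |\bf{u}|$ arises by combining typicality of $\phi(z_i)$ to $\bf{u}$, which gives $|N_G(\phi(z_i)) \cap \bf{u}| \ge \adeg(\bf{v}_1,\bf{u}) - \eps|\bf{u}|$, with the non-saturation inequality $|\bf{u} \cap (\phi(V(T)) \cup F)| < \adeg(\bf{v}_1,\bf{u}) - 4\sqrt{\eps}|\bf{u}|$. The proposition then returns an embedding of $K$ into $\bf{u} \cup \bf{v}$ that respects the bipartition (placing $V(K) \cap V_2$ into $\bf{u}$ and $V(K) \cap V_1$ into $\bf{v}$) and avoids $F \cup \phi(V(T))$, giving the desired extension. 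The main obstacle is less any single inequality than the careful bookkeeping needed to translate the \emph{not saturated} and \emph{not full} hypotheses into the three thresholds required by Proposition~\ref{prop:embed_regular_pair}; once this translation is in hand, the rest is routine.
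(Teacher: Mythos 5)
Your proof is correct and follows essentially the same path as the paper's: locate an unembedded $K \in \fD'$ to which $\bf{u}$ is nice (which exists by the preparatory bound on the vertices in non-nice microtrees), then invoke Proposition~\ref{prop:embed_regular_pair} with $U = F \cup \phi(V(T))$, verifying its three hypotheses from non-saturation of $\bf{u}$, non-fullness of $\bf{v}$, and typicality of the seed images. Your version is a bit more explicit than the paper's in two spots --- deriving $|\bf{u}\setminus U|\ge 4\sqrt{\eps}|\bf{u}|$ directly from non-saturation rather than via the remark that unsaturated implies not full, and noting that condition (5) of Definition~\ref{def:partition_bounded} forces $x_1 \neq x_2$ when $z_1 \neq z_2$ --- but these are elaborations, not deviations.
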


\begin{proof}
We have ensured that all trees of $\fD$ such that $\bf{u}$ is not nice to them have at most $\sqrt[4]{\eps} k$ vertices. Hence there is a yet nonembedded tree $K \in \fD$ such that its at most two neighbours $t_1, t_2$ in $W$ are embedded in vertices of $\bf{v}_1$ that are typical to $\bf{u}$. Note that this implies that 
\begin{align*}
    |N_G(t_i) \cap \left( \bf{u}\setminus\left(\phi(V(T)) \cup F\right)\right)|
    &\ge \adeg(\bf{v}_1, \bf{u}) - \eps|\bf{u}| - |\bf{u} \cap \left( \phi(V(T)) \cup F\right)|\\
    \JUSTIFY{$\bf{u}$ is not saturated}&\ge \adeg(\bf{v}_1, \bf{u}) - \eps|\bf{u}| - \left( \adeg(\bf{v}_1, \bf{u}) - 4\sqrt{\eps} \abs{\bf{u}}\right)\\
    &\ge 3\eps |\bf{u}|. 
\end{align*}

Recall that neither $\bf{u}$, nor $\bf{v}$ are full. Together with the inequality above this enables us to apply Proposition~\ref{prop:embed_regular_pair} with $d_{P\ref{prop:embed_regular_pair}} :=d, \eps_{P\ref{prop:embed_regular_pair}} =\eps, \beta_{P\ref{prop:embed_regular_pair}} =\beta, \bf{v}_{1,P\ref{prop:embed_regular_pair}}  = \bf{v}_1, \bf{u}_{P\ref{prop:embed_regular_pair}}  = \bf{u}, \bf{v}_{P\ref{prop:embed_regular_pair}}  = \bf{v}, K_{P\ref{prop:embed_regular_pair}} = K, v_{i, P\ref{prop:embed_regular_pair}} = \phi(t_i), x_{i, P\ref{prop:embed_regular_pair}} = N_T(t_i) \cap K, U_{P\ref{prop:embed_regular_pair}} = \phi(V(T)) \cup F$. 
The proposition then allows us to extend injectively $\phi$ to $K$. 
\end{proof}

\begin{claim}
Let $\phi$ be a partial embedding of $T$ in $G$ extending $\phi_0$.
    \label{claim:localES_saturation}
    \begin{enumerate}
        \item
        There exists either an unsaturated cluster of $\bf{O}_1$ or an unsaturated edge of $\bf{M}$. 
        \item
        Suppose that $\phi(V_2) \cap \bigcup \bf{M_2} = \emptyset$ and let $\bf{u} \in \bf{O}_1$. There exists a cluster in $N_{\bf{G}}(\bf{u}) \setminus \{ \bf{v}_1 \}$ that is not full. 
    \end{enumerate}
\end{claim}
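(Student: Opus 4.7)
The plan is the same for both parts: assume the conclusion fails and double-count how many vertices of $\phi(V(T))\cup F$ are forced into a certain region of $G$; the degree bound on $\bf{v}_1$ (respectively on $\bf{u}$) provides a slack of order $\eta n$ which comfortably beats all error terms, giving a contradiction with $|V(T)|\le k$ (respectively with $|V_1|\le rk$).

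For part (1), I would suppose for contradiction that every $\bf{u}\in\bf{O}_1$ is saturated and every $\bf{uv}\in\bf{M}$ is saturated. The clusters of $\bf{O}_1$ together with the sets $\bf{u}\cup\bf{v}$ for $\bf{uv}\in\bf{M}$ are pairwise disjoint and collectively cover $N_{\bf{G}}(\bf{v}_1)$, so summing the two saturation inequalities yields
\begin{align*}
    k+|F|\ \ge\ |\phi(V(T))|+|F|\ \ge\ \adeg(\bf{v}_1)\ -\ (m/2)\beta k\ -\ O(\sqrt{\eps})\,n.
\end{align*}
The choice $\beta\le \eta d/(10^5 M_{\reg}(\eps))$ together with $m\le M_{\reg}(\eps)$ makes the $(m/2)\beta k$ error $o(\eta n)$; the $\sqrt{\eps}$ term is similarly negligible; and $|F|\le \eta r\adeg(\bf{v}_1)/100\le \eta rk/50$. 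Combined with $\adeg(\bf{v}_1)\ge k+\eta n/20$, the displayed inequality is violated.

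For part (2), fix $\bf{u}\in\bf{O}_1$ and suppose every cluster in $\mathcal X:=N_{\bf{G}}(\bf{u})\setminus\{\bf{v}_1\}$ is full; set $X=\bigcup\mathcal X$. Since only clusters of $\bf{M}_2$ (among those in $\bf{M}$) are adjacent to $\bf{O}_1$ and $\bf{O}_2=N_{\bf{G}}(\bf{O}_1)\setminus(\{\bf{v}_1\}\cup\bf{M})$, we have $\mathcal X\subseteq\bf{M}_2\cup\bf{O}_2$. The structural observation I will use is that Claim \ref{claim:localES_embed} always places the $V_2$-vertices of a newly embedded microtree into the ``outer'' cluster, which lies in $N_{\bf{G}}(\bf{v}_1)=\bf{M}\cup\bf{O}_1$; combined with the hypothesis $\phi(V_2)\cap\bigcup\bf{M}_2=\emptyset$, this forces $\phi(V_2)\subseteq\bigcup(\bf{M}_1\cup\bf{O}_1)$, which is disjoint from $X$. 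Hence $|X\cap\phi(V(T))|\le|V_1|\le rk$. On the other hand, equitability of the partition with $|\bf{v}_1|\le \eps n$ gives
\begin{align*}
    |X|\ \ge\ \adeg(\bf{u})-|\bf{v}_1|\ \ge\ rk+\eta n/20-\eps n\ \ge\ rk+\eta n/25,
\end{align*}
while fullness of every cluster in $\mathcal X$ gives $|X\cap(\phi(V(T))\cup F)|\ge(1-4\sqrt{\eps})|X|$. Together with $|F|\le\eta rk/50$, these bounds are incompatible once $\eps$ is small relative to $\eta$, giving the desired contradiction.

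The main obstacle I anticipate is bookkeeping: verifying that all the small errors (the $4\sqrt{\eps}|\bf{u}|$ slack in the definitions of full/saturated, the $\beta k$ correction for saturated matching edges, $|F|$, and the residual losses from the preprocessing at the start of Section \ref{sec:proof}) are all dominated by the $\eta n$ slack in the degree bounds on $\bf{v}_1$ and $\bf{u}$, which is guaranteed by the chain of parameter choices made at the beginning of the proof. The conceptual heart of part (2) is the one-directional nature of the embedding algorithm ($V_2$ goes into the cluster adjacent to $\bf{v}_1$, $V_1$ into the other), which is precisely what makes the hypothesis on $\bf{M}_2$ effective for controlling where $V_2$ can land.
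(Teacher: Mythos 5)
Your proposal is correct and follows essentially the same double-counting strategy as the paper: for part (1), sum the saturation inequalities over the disjoint pieces ($\bf{O}_1$ clusters and $\bf{M}$-edges) covering $N_{\bf G}(\bf{v}_1)$ and compare against $|\phi(V(T))\cup F|\le k+|F|$ using $\adeg(\bf{v}_1)\ge k+\eta n/20$; for part (2), sum the fullness inequalities over $N_{\bf G}(\bf{u})\setminus\{\bf{v}_1\}$ and compare against $|\phi(V_1)\cup F|\le rk+|F|$ using $\adeg(\bf{u})\ge rk+\eta n/20$. One remark: in part (2) you supply the explicit structural justification (Claim~\ref{claim:localES_embed} only ever places $V_2$-vertices into clusters of $N_{\bf G}(\bf{v}_1)=\bf{M}\cup\bf{O}_1$, hence $\phi(V_2)\cap\bigcup\bf{O}_2=\emptyset$ automatically) that the paper uses implicitly but does not spell out --- its hypothesis mentions only $\bf{M}_2$, yet its proof invokes $\phi(V_2)\cap\bigcup(\bf{M}_2\cup\bf{O}_2)=\emptyset$ --- so your version is actually a bit more careful on that point.
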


\begin{proof}
    \begin{enumerate}
        \item 
        Suppose that each edge of $\bf{M}$ is saturated. For every $\bf{uv} \in \bf{M}$ we then have 
        \begin{align*}
            |\left( \bf{u} \cup \bf{v} \right) \cap \left( \phi(V(T)) \cup F  \right)|&\\
            \JUSTIFY{$\bf{uv}$ is saturated}&\ge \adeg(\bf{v}_1, \bf{u} \cup \bf{v}) - 8\sqrt{\eps} \abs{\bf{u}} - \beta k  \\
            \JUSTIFY{$\adeg(\bf{v}_1, \bf{u} \cup \bf{v}) \ge 2d|\bf{v}_1|$} &\ge \adeg(\bf{v}_1, \bf{u} \cup \bf{v}) \left( 1 - \frac{8\sqrt{\eps} \abs{\bf{v}_1} + \beta k}{2d |\bf{v}_1|} \right)\\
            \JUSTIFY{$|\bf{v}_1| \ge n/M_\reg(\eps)$, $k\le n$}&\ge \adeg(\bf{v}_1, \bf{u} \cup \bf{v}) \left( 1 - \frac{4\sqrt{\eps}}{d} - \frac{\beta n}{2d n/M_\reg(\eps)}\right)\\
            \JUSTIFY{$\eps \ll (\eta d)^2$,\; $\beta \ll d\eta /M_\reg(\eps)$}&\ge \adeg(\bf{v}_1, \bf{u} \cup \bf{v})(1-\eta/100)
        \end{align*}
        Suppose that each cluster of $\bf{O}_1$ is saturated. After a similar calculation, we get that for each $\bf{u} \in \bf{O}_1$ we have
        \begin{align*}
            |\bf{u} \cap (\phi(V(T)) \cup F)| 
            \ge \adeg(\bf{v}_1, \bf{u})(1-\eta/100). 
        \end{align*}
        Hence we have
        \begin{align*}
            \bigl\lvert\bigcup(\bf{M} \cup \bf{O}_1) \cap (\phi(V(T)) \cup F))\bigr\rvert
            &\ge \adeg(\bf{v}_1)(1-\eta/100)\\ 
            &= \eta \adeg(\bf{v}_1)/100 + \adeg(\bf{v}_1)(1-\eta/50)\\
            \JUSTIFY{$\adeg(\bf{v}_1) \ge k+\eta k / 20$}&\ge \eta \adeg(\bf{v}_1)/100 + (k+\eta k / 20)(1-\eta/50) \\
            \JUSTIFY{$|F| \le \eta \adeg(\bf{v}_1)/100$}&>|F|+k
            \ge |\phi(V(T)) \cup F|,
        \end{align*}
        a contradiction. 
        
        \item
        Suppose that all clusters in  $N_{\bf{G}}(\bf{u}) \setminus \{ \bf{v}_1 \}$ are full. 
        For each full cluster $\bf{v}$ we have 
        \begin{align*}
            |\bf{v} \cap (\phi(V(T)) \cup F)| 
            &\ge |\bf{v}|(1 - 4 \sqrt{\eps} ) \\
            \JUSTIFY{$\eps \ll \eta^2$}&\ge |\bf{v}|(1-\eta/200) \\
            &\ge \adeg(\bf{u}, \bf{v})(1-\eta/200).
        \end{align*}
        Since this holds for any $\bf{v} \in N_{\bf{G}}(\bf{u}) \setminus \{ \bf{v}_1 \}$, we get that 
        \begin{align*}
             \bigl\lvert\bigcup(\bf{M}_2 \cup \bf{O}_2) \cap (\phi(V(T)) \cup F))\bigr\rvert
             &\ge \adeg\left(\bf{u}, \bigcup(\bf{M}_2 \cup \bf{O}_2) \setminus \bf{v}_1\right)\left(1-\eta/200\right)\\ 
             &\ge \left( \adeg\left(\bf{u}, \bigcup(\bf{M}_2 \cup \bf{O}_2)\right) - |\bf{v}_1| \right)\left(1-\eta/200\right)\\ 
             \JUSTIFY{$N_{\bf G}(\bf u) \subseteq \{\bf{v}_1\} \cup \bf{M}_2 \cup \bf{O}_2$}&\ge \adeg\left(\bf{u}\right) \left(1-\eta/200\right) -  |\bf{v}_1| \\ 
             \JUSTIFY{$\frac{\eta}{200}\adeg(u) \ge \frac{\eta rk}{200} \ge \frac{\eta^2 rn}{200} \gg \eps n \ge  |\bf{v}_1|$}&\ge \adeg(\bf{u})(1-\eta/100). 
        \end{align*}
        Hence, we have
        \begin{align*}
            \bigl\lvert\bigcup(\bf{M}_2 \cup \bf{O}_2) \cap (\phi(V_1) \cup F))\bigr\rvert \\
           \JUSTIFY{$\phi(V_2) \cap \left( \bigcup (\bf{M}_2\cup\bf{O}_2) \right) = \emptyset$}&= \bigl\lvert\bigcup(\bf{M}_2 \cup \bf{O}_2) \cap (\phi(V(T)) \cup F))\bigr\rvert\\ 
            \JUSTIFY{inequality above} &\ge \adeg(\bf{u})(1-\eta/100) \\
            &\ge \eta\adeg(\bf{u})/50 + \adeg(\bf{u}) (1-\eta/30)\\
            \JUSTIFY{$\adeg(\bf{v}_1)\le2k$, $\adeg(\bf{u})\ge rk+\eta k/20$}&\ge \eta r \adeg(\bf{v}_1)/100 + (rk + \eta k /20)(1-\eta/30)\\
            &> |F| + rk \\
            &\ge |F| + |V_1| 
            \ge |(\phi(V_1) \cup F))|,
        \end{align*}
        a contradiction. 
    \end{enumerate}
 
\end{proof}

We can now finish the proof of Theorem \ref{thm:localES_dense_skew}. 

\begin{proof}
We will gradually embed microtrees from $\fD'$ in $\bigcup\left(\bf{M} \cup \bf{O}\right)$ in a specified manner using Claim~\ref{claim:localES_embed} (hence avoiding the set $F$), until $|\dom(\phi)| \ge k - \sqrt[4]{\eps}k$, or all edges of $\bf{M}$ and all vertices of $\bf{O}$ are saturated -- from Claim~\ref{claim:localES_saturation} (1) we know that the latter actually cannot be true. When $|\dom(\phi)| \ge k - \sqrt[4]{\eps}k$, we finish by applying Claim~\ref{claim:localES_finishing} on our set $F$. 
We split the embedding procedure into three phases:

\begin{enumerate}
    \item \textit{Phase 1 -- saturating the matching edges of $\bf{M}$. }
    In the first phase we embed gradually the microtrees of $\fD'$ in the edges of $\bf{M}$ in such a way that for each $K\in \fD'$ we have $\phi(K \cap V_2) \subseteq \bf{M}_1$. We run the process of applying Claim~\ref{claim:localES_embed} for each edge $\bf{uv}$ until either $\bf{u} \in \bf{M}_1$ is saturated, $\bf{v} \in \bf{M_2}$ is full, or $|\dom(\phi)| \ge k - \sqrt[4]{\eps}k$. 
    
    \item \textit{Phase 2 -- saturating the clusters in $\bf{O}$. }
    We repeatedly pick a cluster $\bf{v} \in \bf{O}_1$ and then embed trees from $\fD'$ in it by repeatedly applying Claim~\ref{claim:localES_embed} in such a way that for each embedded $K$ we have $\phi(K \cap V_2) \subseteq \bf{O}_1$ and $\phi(K \cap V_1) \subseteq \bf{M}_2 \cup \bf{O}_2$. Note that due to Claim~\ref{claim:localES_saturation} (2) the cluster $\bf{v}$ has always a neighbour that is not full and can be, thus, used for the embedding. Hence we can apply this procedure until all clusters from $\bf{O}_1$ are saturated, or $|\dom(\phi)| \ge k - \sqrt[4]{\eps}k$. 
    
    \item \textit{Phase 3 -- finalising the matching $\bf{M}$. }
    All clusters in $\bf{O}_1$ are now saturated.
    Our task is now to show how to saturate the remaining edges of $\bf{M}$. This may not be possible with current $\phi$ as it is defined right now, since it could have for example happened that after the first phase we completely filled one cluster from a matching pair, while the other cluster remained almost empty. 
    We solve this problem by potentially redefining the embedding of several microtrees that were embedded in $\bf{M}_1 \cup \bf{M_2}$ in Phase 1. 
    
    Note that for each edge $\bf{uv} \in \bf{M}$, $\bf{u} \in \bf{M}_1$, it is true that either $\bf{u}$ is saturated, or $\bf{v}$ is full at the end of Phase 1. 
    We deal with the first case in part (a). 
    In the latter case we did not embed anything in $\bf{v}$ in Phase 2. We undefine embedding of all trees that were embedded in $\bf{uv}$ and saturate this edge in part (c). 
    \begin{enumerate}
        \item 
        If $\bf{u}$ is saturated, we repeatedly embed trees in $\bf{uv}$ in such a way that for each $K \in \fD'$ we have $\phi(K \cap V_2) \subseteq \bf{v}$. We do this until either $\bf{u}$ is full, or $\bf{v}$ is saturated. In the latter case the whole edge is saturated. We deal with the first case in (b). 
        
        \item
        Suppose that $\bf{u}$ is full, but $\bf{v}$ is not saturated. Note that Claim~\ref{claim:localES_finishing} ensures that $|F \cap \bf{u}| = |F \cap \bf{v}|$. Hence it must be the case that $\abs{\phi(V(T)) \cap \bf{u}} \ge \abs{\phi(V(T)) \cap \bf{v}}$. Moreover, in Phase 2 we did not embed trees in $\bf{u}$. This means that there exists a tree $K \in \fD'$ that was embedded in the matching edge $\bf{uv}$ in such a way that $\abs{\phi(V(K)) \cap \bf{u}} \ge \abs{\phi(V(K)) \cap \bf{v}}$. 
        As long as it is true that $|\phi(V(T)) \cap \bf{u}| \ge |\phi(V(T)) \cap \bf{v}|$, we find any tree $K$ with this property and we undefine its embedding. When this procedure ends, we have $\left| \left|\phi(V(T)) \cap \bf{u}\right| - \left|\phi(V(T)) \cap \bf{v}\right| \right| \le \beta k$. We later refer to this inequality as the \emph{balancing} condition. 
        
        \item
        Finally, it suffices to show how to saturate an edge $\bf{uv}$ fulfilling the balancing condition (note that if $\phi(V(T)) \cap \bf{uv} = \emptyset$, then the matching edge certainly fulfills the condition). We again embed the microtrees in $\bf{uv}$ one after another. Unless one of the clusters is saturated, we choose to embed $K \in \fD'$ in such a way that the colour class of $K$ with less vertices is embedded in the cluster such that more of its vertices were already used for the embedding of $T$. In this way we ensure that the balancing condition still holds. 
    
        After one cluster, say $\bf{u}$, becomes saturated, we continue by embedding only in such a way that for each $K \in \fD'$ we have $\phi(K \cap V_2) \subseteq \bf{v}$. We do this until either $\bf{v}$ becomes saturated, or $\bf{u}$ is full. In the first case the whole edge $\bf{uv}$ is clearly saturated. In the other case note that we have 
        \begin{align*}
            |(\phi(V(T))\cup F) \cap \bf{u}| 
            &\\
            \JUSTIFY{$\bf{u}$ is full}&\ge |\bf{u}| - 4\sqrt{\eps} |\bf{u}|\\ \JUSTIFY{$|\bf{u}| = |\bf{v}| \ge \adeg(\bf{v}_1, \bf{v})$}&\ge \adeg(\bf{v}_1, \bf{v}) - 4\sqrt{\eps} |\bf{v}|
        \end{align*}
        and hence
        \begin{align*}
            |(\phi(V(T))\cup F) \cap \bf{v}| 
            &\\
            \JUSTIFY{the balancing condition}&\ge |(\phi(V(T))\cup F) \cap \bf{u}| - \beta k\\ 
            \JUSTIFY{$\bf{u}$ is full, hence saturated}&\ge \adeg(\bf{v}_1, \bf{u}) - 4\sqrt{\eps} |\bf{u}| - \beta k.
        \end{align*}
        Hence, the whole matching edge is saturated.     
    \end{enumerate}
\end{enumerate}

We described an algorithm that terminates when $|\dom(\phi)| \ge k - \sqrt[4]{\eps}k$, or all edges of $\bf{M}$ and all vertices of $\bf{O}$ are saturated. But the latter cannot happen due to Claim~\ref{claim:localES_saturation} (1). We finish by invoking Claim~\ref{claim:localES_finishing}. 
\end{proof}

\section{Acknowledgements}
\label{sec:conclusion}
This paper, as well as some similar results are part of the author's Bachelor's thesis \cite{Rozhon2018}. 

I would like to thank Stephan Wagner for providing the proof of Proposition \ref{prop:random_trees}. My great thanks go to Tereza Klimošová and Diana Piguet for many helpful discussions and comments and to two anonymous referees for many helpful comments. 

\bibliographystyle{siamplain}
\bibliography{bibliography}

\end{document}